\newcommand{\Ag}{J}
\newcommand{\ec}{r}
\newcommand{\vc}{p}
\newcommand{\vn}{v}
\newcounter{i}
\theoremstyle{plain}
\newtheorem{thm}{Theorem}[section]
\newtheorem{claim}{Claim}[thm]
\newtheorem{question}[thm]{Question}
\noindent \emph{Proof.} {}{#1}{}}{\hfill
\theoremstyle{plain} 
\newcommand{\thistheoremname}{}
\newtheorem{genericthm}{\thistheoremname}
\theoremstyle{definition}
\newtheorem{definition}[thm]{Definition}
\newcommand{\Prob}[1]{\ensuremath{%
\mathbb P\left[#1\right]
}}
\title{On generalized Ramsey numbers in the non-integral regime} 
\author{
Patrick Bennett
\thanks{Department of Mathematics, Western Michigan University, Kalamazoo, Michigan 49008, USA, {\tt Patrick.Bennett@wmich.edu}. Supported in part by Simons Foundation Grant No. 426894.}
\and
Michelle Delcourt
\thanks{Department of Mathematics, Toronto Metropolitan University,
Toronto, Ontario M5B 2K3, Canada, {\tt mdelcourt@torontomu.ca}. Research supported by NSERC under Discovery Grant No. 2019-04269.}
\and 
Lina Li
\thanks{Department of Mathematics,
Iowa State University, Ames, Iowa 50014, USA, {\tt linali@iastate.edu}.}
\and
Luke Postle
\thanks{Combinatorics and Optimization Department,
University of Waterloo, Waterloo, Ontario N2L 3G1, Canada, {\tt lpostle@uwaterloo.ca}. Partially supported by NSERC
under Discovery Grant No. 2019-04304.}}
\date{\today}
\begin{document}

\maketitle

\begin{abstract} 
A \emph{$(p,q)$-coloring} of a graph $G$ is an edge-coloring of $G$ such that every $p$-clique receives at least $q$ colors.  In 1975, Erd\H{o}s and Shelah introduced the \emph{generalized Ramsey number} $f(n,p,q)$ which is the minimum number of colors needed in a $(p,q)$-coloring of $K_n$.  In 1997, Erd\H{o}s and Gy\'arf\'as showed that
$f(n,p,q)$ is at most a constant times $n^{\frac{p-2}{\binom{p}{2} - q + 1}}$.
Very recently the first author, Dudek, and English improved this bound by a factor of $\log n^{\frac{-1}{\binom{p}{2} - q + 1}} $ for all $q \le \frac{p^2 - 26p + 55}{4}$, and they ask if this improvement could hold for a wider range of $q$.

We answer this in the affirmative for the entire non-integral regime, that is, for all integers $p, q$ with $p-2$ not divisible by $\binom{p}{2} - q + 1$.
Furthermore, we provide a simultaneous three-way generalization as follows: where $p$-clique is replaced by any fixed graph $F$ (with $|V(F)|-2$ not divisible by $|E(F)| - q + 1$); to list coloring; and to $k$-uniform hypergraphs. Our results are a new application of the Forbidden Submatching Method of the second and fourth authors.
\end{abstract}

\section{Introduction}

In 1975, Erd\H{o}s and Shelah~\cite{E75} proposed a generalization of Ramsey numbers as follows. A \emph{$(p,q)$-coloring} of a graph $G$ is an edge-coloring of $G$ such that every $p$-clique receives at least $q$ colors. The \emph{generalized Ramsey number} $f(n,p,q)$ is the minimum number of colors in a $(p,q)$-coloring of $K_n$. This notion was further developed by Erd\H{o}s and Gy\'arf\'as~\cite{EG97} in 1997 and has attracted much attention over the recent decades (e.g.~\cite{AFM00, BEHK21, BCDP22, CFLS15, JM22, M98}). We refer the reader to the survey chapter on hypergraph Ramsey problems by Mubayi and Suk~\cite{MS20} as well as the recent paper of the first author, Dudek and English~\cite{BDE22} for more history of the area.

In 1997, Erd\H{o}s and Gy\'arf\'as~\cite{EG97} proved the following via a simple application of the Lov\'asz Local Lemma.
\begin{thm}[Erd\H{o}s and Gy\'arf\'as~\cite{EG97}]\label{thm:EG}
For fixed positive integers $p,q$ with $p>2$ and $1\leq q \leq \binom{q}{2}$, we have
$$f(n,p,q) = O\left( n^{\frac{p-2}{\binom{p}{2} - q + 1}} \right).$$
\end{thm}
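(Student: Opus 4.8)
The plan is to apply the Lov\'asz Local Lemma to a uniformly random edge-coloring. Write $j := \binom{p}{2} - q + 1$ and put $N := C\, n^{(p-2)/j}$ for a constant $C = C(p,q)$ to be fixed at the end. Color each edge of $K_n$ independently and uniformly at random from a palette of $N$ colors, and for each $p$-clique $S$ let $A_S$ be the bad event that $S$ receives at most $q-1$ distinct colors. It suffices to show $\Pr\!\left[\bigcap_S \overline{A_S}\right] > 0$, since then some coloring avoids every $A_S$ and hence is a $(p,q)$-coloring.

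The first step is to bound $\Pr[A_S]$. The key combinatorial observation is this: form the auxiliary graph $H_S$ on the $\binom{p}{2}$ edges of $S$, joining two edges when they received the same color. Each color class is a clique, hence connected, in $H_S$, so if $S$ sees at most $q-1$ colors then $H_S$ has at most $q-1$ components and therefore contains a forest with at least $\binom{p}{2} - (q-1) = j$ edges; each edge of this forest is a monochromatic \emph{pair} of edges of $S$. Now for any fixed forest $T$ on the edge set of $S$, the event ``every pair in $T$ is monochromatic'' has probability exactly $N^{-|E(T)|}$: decompose $T$ into connected components, and note that forcing a component of size $s$ to be monochromatic costs a factor $N^{-(s-1)}$, with $\sum (s-1) = |E(T)|$. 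Taking a union bound over the at most $c_{p,q}$ choices of a $j$-edge sub-forest of $H_S$ gives $\Pr[A_S] \le c_{p,q}\, N^{-j}$, where $c_{p,q}$ depends only on $p$ (it is bounded by the number of $j$-edge forests on $\binom{p}{2}$ labelled vertices).

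Next, $A_S$ is mutually independent of the family $\{A_{S'} : E(S) \cap E(S') = \emptyset\}$, because these events are determined by disjoint sets of independent edge-colors. The number of $p$-cliques $S'$ with $E(S') \cap E(S) \neq \emptyset$ is at most $\binom{p}{2}\binom{n}{p-2} \le \binom{p}{2} n^{p-2}$ (pick two shared vertices, then the remaining $p-2$), so the dependency degree satisfies $D + 1 \le 2\binom{p}{2} n^{p-2}$. By the symmetric Local Lemma it is enough that $e \cdot c_{p,q} N^{-j} \cdot 2\binom{p}{2} n^{p-2} \le 1$; since $N^j = C^j n^{p-2}$, this reduces to $C^j \ge 2e\, c_{p,q} \binom{p}{2}$, which holds for a suitable $C = C(p,q)$. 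Hence a $(p,q)$-coloring of $K_n$ with $N = O\!\left(n^{(p-2)/(\binom{p}{2}-q+1)}\right)$ colors exists, which is the claimed bound.

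I expect the only genuine subtlety to be the estimate for $\Pr[A_S]$: one must argue carefully that ``at most $q-1$ colors'' really does force $j$ monochromatic pairs with an acyclic (forest) incidence structure, and that the corresponding probability factors across components so the exponent is exactly $j$ rather than something weaker. Everything else --- the mutual-independence description and the symmetric Local Lemma arithmetic --- is routine, with all multiplicative constants absorbed into $C$.
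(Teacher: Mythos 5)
Your proof is correct and uses the same high-level strategy as the paper: a uniformly random edge coloring combined with the symmetric Lov\'asz Local Lemma, with bad events indexed by $p$-cliques (or, in the paper's generalization, copies of $F$), and with the standard dependency-degree count. (The paper cites Theorem~\ref{thm:EG} without proof and instead proves its generalization Theorem~\ref{thm:base}; that is the proof I compare against.) Where you genuinely diverge is in the estimate for $\Pr[A_S]$. You build the auxiliary graph $H_S$ on the $\binom{p}{2}$ edges of $S$, joining same-colored pairs, observe that seeing at most $q-1$ colors forces at most $q-1$ components and hence a spanning forest with at least $j := \binom{p}{2}-q+1$ edges, and union-bound over $j$-edge forests using the component-wise factorization $\prod_i N^{-(s_i-1)} = N^{-j}$. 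The paper instead chooses the at most $q-1$ target colors up front and forces every edge of $S$ into that set, giving $\binom{|E(F)|\,t}{q-1}\bigl((q-1)/t\bigr)^{|E(F)|} \sim t^{-j}$. Both routes yield the same exponent $-j$; yours has the cosmetic advantage of an $n$-independent constant $c_{p,q}$ multiplying $N^{-j}$, whereas the paper's bound carries a polynomial-in-$t$ binomial prefactor that is absorbed once the powers of $t$ are collected. The paper's phrasing handles list coloring directly, but your forest argument transfers to lists with no loss: conditioning on the color assigned to one edge in a component, each remaining edge of that component independently matches with probability at most $1/t$, so the per-component cost $t^{-(s_i-1)}$ and hence the overall $t^{-j}$ bound survive.
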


Furthermore, Erd\H{o}s and Gy\'arf\'as~\cite{EG97} determined the linear and quadratic thresholds for this problem as follows: more specifically, if $q=\binom{p}{2} - p +3$, they showed that $f(n,p,q) = \Theta(n)$, while Theorem~\ref{thm:EG} implies that if $q=\binom{p}{2}-p+2$, then $f(n,p,q)=o(n)$; similarly if $q = \binom{p}{2} - \lfloor \frac{p}{2} \rfloor + 2$, they showed that $f(n,p,q) = \Omega(n^2)$ while if $q=\binom{p}{2} - \lfloor \frac{p}{2} \rfloor + 1$, then Theorem~\ref{thm:EG} implies that $f(n,p,q)=o(n^2)$.

Very recently, the first author, Dudek and English~\cite{BDE22} improved the bound in Theorem~\ref{thm:EG} by a logarithmic factor for a wide range of values of $p$ and $q$ in the sublinear regime as follows.

\begin{thm}[Bennett, Dudek and English~\cite{BDE22}]\label{thm:BDE}
For fixed positive integers $p,q$ with $q \le \frac{p^2 - 26p + 55}{4}$, we have
$$f(n,p,q) = O\left( \left(\frac{n^{p-2}}{\log n}\right)^{\frac{1}{\binom{p}{2} - q + 1}} 
\right).$$
\end{thm}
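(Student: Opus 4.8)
The plan is to replace the single-shot Lov\'asz Local Lemma argument behind \Cref{thm:EG} with a semi-random (``nibble'') construction of the coloring, which for sparse-coloring problems of this type typically recovers a logarithmic factor. Writing $t:=\binom p2-q+1$, the key point is that at the target value $m=C\,(n^{p-2}/\log n)^{1/t}$ (with $C=C(p,q)$ a large constant) one has $m^{-t}n^{p-2}=\Theta(\log n)$, so the product of the probability that a fixed $p$-set is bad with its dependency degree is $\Theta(\log n)$; the Local Lemma thus misses by \emph{exactly} a logarithmic factor, and the task is to recover it by building the coloring in rounds while maintaining a self-correcting invariant on the surviving colors (equivalently, by producing a conflict-free near-perfect matching in the natural auxiliary hypergraph on edge-color pairs).

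\textbf{Reformulation and the nibble.} An edge-coloring of $K_n$ is a $(p,q)$-coloring if and only if every $p$-set $S$ has \emph{excess} $\sum_{c}\bigl(\,|\{e\in\binom S2:e\text{ has color }c\}|-1\,\bigr)^{+}\le t-1$; equivalently, no $p$-set carries a \emph{bad configuration}, that is, a set of $j\ge t+1$ edges spanning at most $j-t$ colors. Up to isomorphism there are finitely many minimal bad configurations, each on at most $p$ vertices with at most $\binom p2$ edges, and the hypothesis $q\le\frac{p^2-26p+55}{4}$ is equivalent to $t$ being large relative to $\binom p2$ (roughly $t\gtrsim\tfrac12\binom p2$), which forces every bad configuration to be \emph{heavy}: it uses almost all $\binom p2$ edges of its clique. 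With $m$ as above and a small constant $\gamma$, color the edges of $K_n$ in $R=\Theta(\log n)$ rounds. Entering round $i$, each uncolored edge $e$ carries a list $L_i(e)\subseteq[m]$ of \emph{available} colors---those $c$ for which coloring $e$ with $c$ completes no bad configuration together with the edges colored so far. In round $i$ we activate each uncolored edge independently with probability $\gamma$, give each activated edge a uniformly random color from its current list, and then update the lists. The invariants carried through the process are that $|L_i(e)|\ge(1-o(1))m$ for every uncolored $e$ and that the number of uncolored edges shrinks by a factor $1-\gamma+o(\gamma)$ each round; since no bad configuration is ever completed, the output is automatically a $(p,q)$-coloring, so the only way to fail is to exhaust some list.

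\textbf{One-round analysis and conclusion.} Fix an uncolored edge $e$ and a color $c\in L_i(e)$: the probability that $c$ leaves $L_i(e)$ during round $i$ is, up to constants, $\gamma/m$ times the number of bad configurations through $e$ that are already on track for $c$ (their colored edges fit the pattern) and have exactly one uncolored edge left. The core estimate is that this count stays comfortably below $m/\gamma$ at all times: heaviness forces such a near-complete configuration to exhibit a dense local pattern inside the color class $G_c$, which carries only $\approx n^2/m$ edges and is itself kept quasirandom by the process, so a first-moment computation over the density $\rho_i$ of already-colored edges shows that the expected number of colors lost at $e$ over the whole run is $o(m)$---and this is precisely the step that turns the logarithmic deficit of the Local Lemma into a logarithmic \emph{saving} in $m$. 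Concentration of $|L_{i+1}(e)|$ about its mean follows from a Talagrand-type inequality (activating one edge perturbs only boundedly many lists), and the rare failure events ``$e$ lost too many colors'' and ``too few edges were colored near a vertex $v$'' are dispatched by the Local Lemma, their dependency being governed by the same $O(n^{p-2})$ codegree bound. After $R$ rounds only polynomially few edges remain uncolored; the residual graph is sparse, so a final Local Lemma step as in \Cref{thm:EG}, using the surviving lists, colors the remaining edges from the same palette of $m$ colors, giving $f(n,p,q)\le m=O\bigl((n^{p-2}/\log n)^{1/(\binom p2-q+1)}\bigr)$.

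\textbf{Main obstacle.} Everything hinges on the list-survival estimate: that over all $\binom n2$ edges and all $R$ rounds no available-color list ever drops below $(1-o(1))m$. A crude union bound at $m=\Theta\bigl((n^{p-2}/\log n)^{1/t}\bigr)$ over-counts the dangerous configurations by a factor of order $\log n$---the very slack by which the Local Lemma fails---so one must genuinely exploit both the evolving quasirandomness of the color classes and the heaviness of bad configurations to beat that bound by the required margin, and then prove concentration sharp enough to union-bound over all edges, colors and rounds despite distinct bad configurations sharing edges. Getting the mean estimate and the concentration to close simultaneously is the delicate heart of the proof, and it is precisely why this improvement is, for the moment, confined to the heavy (small-$q$) regime $q\le\frac{p^2-26p+55}{4}$.
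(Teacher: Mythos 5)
The paper does not prove \Cref{thm:BDE} itself: it is cited from Bennett, Dudek and English~\cite{BDE22}, whose argument runs to 49 pages. What the paper proves is the strictly stronger \Cref{thm:main1} (and its generalization \Cref{thm:main2}), which covers the entire non-integral regime and subsumes \Cref{thm:BDE}: when $q\le\frac{p^2-26p+55}{4}$ one has $\binom{p}{2}-q+1\ge\frac{p^2+24p-51}{4}>p-2$, so $\binom{p}{2}-q+1$ cannot divide $p-2$. Your sketch retraces the strategy of the original BDE paper --- a multi-round random greedy coloring with evolving available-color lists, Talagrand-type concentration, and a delicate ``heaviness'' analysis of near-complete bad configurations inside quasirandom color classes --- and you correctly flag that closing the list-survival estimate against the concentration bound is where all the difficulty lies. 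That delicacy is precisely why the random-process route is confined to the small-$q$ (heavy) regime and why it is so long; your proposal is directionally sound as a roadmap to \emph{their} proof, but it is a sketch of the hard path, not a completed argument, and the ``main obstacle'' you name is the whole ballgame.

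The paper's route is genuinely different and much shorter. It encodes the problem as finding an $A$-perfect matching in a bipartite auxiliary hypergraph $J$ (with $A=E(K_n)$ and edges recording edge--color pairs) that avoids a configuration hypergraph $H$, then applies \Cref{thm:SmallCodegree} as a black box. The logarithmic saving is imported wholesale from the $\alpha\cdot D^{i-1}\log D$ degree allowance in that theorem, with no iterative analysis to perform. The key innovation --- the analogue of your ``heaviness'' idea, but doing far more work --- is that $H$ is not taken to be the $(q-1)$-colored copies of $F$ (whose subconfigurations can have unacceptably large codegrees) but rather the inclusion-minimal submatchings $M$ of nonnegative \emph{potential} $\rho(M)=|M|-|C(M)|-(|V(M)|-2)\cdot\frac{\binom{p}{2}-q+1}{p-2}$. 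Minimality forces every proper subconfiguration to have negative potential, which is exactly what makes the codegree bounds $\Delta_{s,t}(H)\le D^{s-t-\beta}$ go through, and the non-divisibility hypothesis is used at a single point, to bound $\Delta_2(H)$. This replaces your heaviness-plus-quasirandomness bookkeeping with a clean one-shot codegree verification, and it is what lets the conclusion extend to the full non-integral regime, to arbitrary $F$, to list coloring, and to $k$-uniform hypergraphs --- none of which is accessible from the argument you outline.
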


Note this is roughly the range up to $q\approx \frac{p^2}{4}$. Their proof is quite involved and proceeds via a random greedy process. The authors asked if the bounds could be improved even up to the $q\approx \frac{p^2}{2}$ range.

We answer this in the affirmative for all $q$ below the linear threshold and furthermore for all $q$ where the exponent is non-integral with our first main result as follows.

\begin{thm}\label{thm:main1}
For fixed positive integers $p,q$ with $p-2$ not divisible by $\binom{p}{2} - q + 1$, we have
$$f(n,p,q) = O\left( \left(\frac{n^{p-2}}{\log n}\right)^{\frac{1}{\binom{p}{2} - q + 1}} 
\right).$$
\end{thm}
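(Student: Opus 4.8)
The plan is to obtain the improved upper bound by reformulating the existence of an economical $(p,q)$-coloring as a near-perfect matching problem in an auxiliary hypergraph, and then to apply the Forbidden Submatching Method of Delcourt and Postle. Concretely, fix $p,q$ and set $s := \binom{p}{2} - q + 1$, the minimum number of colors that may be missing from a $p$-clique; an edge-coloring is a $(p,q)$-coloring precisely when no $p$-clique sees at most $s-1$ colors. The natural object to build is a large partial proper "palette structure": think of color classes as graphs, and encode a coloring of $K_n$ by a collection of edge-disjoint subgraphs, one per color. To get the $\log n$ savings we want each color class to be essentially as dense as a clique-free / locally sparse graph can be, namely on the order of $n^{1+\frac{1}{?}}$ edges — in the non-integral regime the relevant extremal exponent is not an integer, which is exactly what lets us beat the union bound by a logarithmic factor. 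So I would first pin down the right local constraint: each color class should be a graph in which no copy of $F$ (here $F=K_p$) has too many edges from a single color, and one shows via the standard probabilistic deletion / alteration heuristic that such classes can be packed with $O\!\big((n^{p-2}/\log n)^{1/s}\big)$ colors if a suitable hypergraph matching exists.

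Next I would set up the auxiliary hypergraph $\mathcal{H}$ whose vertex set is $E(K_n)$ and whose edges are the candidate color classes (appropriately truncated so that each has bounded size and bounded codegrees), together with a family of \emph{forbidden submatchings} encoding the bad events — namely partial matchings that would force some $p$-clique to receive fewer than $q$ colors. The key point is that $\mathcal{H}$ is almost-regular with small codegrees (this is a routine counting computation using that $p$ and $q$ are fixed), and the forbidden structures are "spread out" in the sense required by the method: each $p$-clique involves only a bounded number of edges, so the forbidden submatchings have bounded size and controlled multiplicity. Then the Forbidden Submatching Method yields a near-perfect matching in $\mathcal{H}$ avoiding all forbidden submatchings, which translates back into a $(p,q)$-coloring of all but a negligible set of edges of $K_n$; the leftover edges are colored greedily with new colors, contributing only a lower-order term. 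The savings of $(\log n)^{-1/s}$ comes out because the natural "cover" bound on the number of color classes needed is of order $n^{p-2}$ per color-slot, but the nibble/matching argument replaces a union-bound loss by a $\log$ factor — precisely the phenomenon the method is designed to exploit, and precisely why the argument works in the non-integral regime where the greedy process of Bennett–Dudek–English stalls.

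The main obstacle I anticipate is verifying the hypotheses of the Forbidden Submatching Method in this setting: one must (i) define the vertex set and edge set of $\mathcal{H}$ so that it is genuinely almost-regular of the right (polynomial) degree with polynomially small codegrees, which requires carefully truncating the candidate color classes and showing the truncation does not destroy regularity; (ii) identify the correct family of forbidden submatchings and check that it is of bounded size and satisfies the required "boundedness/codegree" conditions, so that a typical matching avoids them; and (iii) handle the non-integrality quantitatively — the exponent $\frac{p-2}{s}$ being non-integral is what guarantees the relevant extremal number $\mathrm{ex}$ has the shape that produces the logarithmic gain, and this needs a clean lemma (essentially a density/supersaturation statement for $F$-patterns) quantifying it. Once these three ingredients are in place, the deduction of Theorem~\ref{thm:main1} is immediate: color $K_n$ using the matching, clean up the $o(n^2)$ uncovered edges, and count colors. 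I would also remark that the three-way generalization (arbitrary $F$, list coloring, $k$-uniform hypergraphs) requires no new ideas at this level of the proof — it only changes the definition of $\mathcal{H}$ and of the forbidden submatchings — so I would prove the general statement directly and derive Theorem~\ref{thm:main1} as the special case $F = K_p$.
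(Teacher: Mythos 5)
Your proposal diverges from the paper's argument in ways that create genuine gaps. First, the encoding: you take the auxiliary hypergraph $\mathcal{H}$ to have vertex set $E(K_n)$ and edges corresponding to entire (truncated) \emph{color classes}, so that a matching is a packing of color classes; the paper instead builds a bipartite hypergraph $J=(A,B)$ with $A=E(G)$, $B$ the set of edge-color pairs $(e,c)$ with $c\in L(e)$, and edges $\{e,(e,c)\}$ of size two, so that $J$ is a disjoint union of stars and an $A$-perfect matching is exactly a list coloring. This choice is not cosmetic. With your encoding you only get a \emph{near}-perfect matching, and your cleanup step -- ``color the $o(n^2)$ leftover edges greedily with new colors'' -- would destroy the bound: $o(n^2)$ new colors can vastly exceed the target $O\bigl((n^{p-2}/\log n)^{1/s}\bigr)$ whenever $s=\binom{p}{2}-q+1$ is larger than roughly $(p-2)/2$, i.e.\ in most of the regime. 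The paper explicitly notes that this is why it needs the bipartite $A$-perfect formulation of the Forbidden Submatching Method (rather than the almost-perfect version) so that \emph{every} edge is colored with no leftover.

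Second, and more importantly, you identify as a routine verification the step ``identify the correct family of forbidden submatchings and check the boundedness/codegree conditions,'' but that step is precisely where the natural approach breaks down: the obvious forbidden family (all partial colorings of a copy of $F$ with at most $q-1$ colors) has codegrees that are too large to satisfy the hypotheses $\Delta_{s,t}(H)\le D^{s-t-\beta}$. The paper's central new idea is to introduce a \emph{potential function} $\rho(M)=|M|-|C(M)|-(|V(M)|-k)\frac{r-q+1}{p-k}$ on submatchings and to take $H$ to be the \emph{inclusion-minimal} matchings inside a copy of $F$ with $\rho\ge 0$; minimality is exactly what forces proper sub-sets to have $\rho<0$ and hence small codegree. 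Your sketch contains no analogue of this. Relatedly, the role of the non-integrality hypothesis in the paper is not about extremal numbers or supersaturation as you suggest; it is used in a single place, to show $\Delta_2(H)\le D^{1-\beta}$: a $2$-configuration with $\rho\ge 0$ forces one repeated color and at most $k+\lfloor(p-k)/(r-q+1)\rfloor$ vertices, and non-divisibility yields the strict inequality that saves the factor $D^{-\beta}$. Without the potential-function definition of $H$ and this $2$-degree computation, the application of the Forbidden Submatching Method does not go through.
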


As indicated by the linear and quadratic thresholds discussed earlier, in the integral regime, the Erd\H{o}s-Gy\'arf\'as bound (Theorem~\ref{thm:EG}) is tight (up to a constant factor). This implies that Theorem~\ref{thm:main1} is the best possible in terms of the range of $p$ and $q$. Our theorem also provides the \textit{first improvement} on the general upper bound of $f(n, p, q)$ for all $q\geq p$, i.e., in the linear and superlinear regimes, since the local lemma bound given by Erd\H{o}s-Gy\'arf\'as in 1997.

From our result, it is now evident that for the non-integral regime, the local lemma upper bound is not close to the truth. A natural thought is to seek lower bounds. 
However, unlike upper bounds, there is no general formula known for lower bounds on all pairs of $p$ and $q$. In fact, the situation is more complicated for lower bounds, and there are many values of $(p, q)$ for which very little is known about lower bounds of $f(n, p, q)$. For pairs of $(p, q)$ with known lower bounds, we refer interested readers to the recent papers \cite{BEHK21, fish2020local, pohoata2019local}.

It is worth noting that our proof is independent of the proof of Theorem~\ref{thm:BDE} in~\cite{BDE22}, which spans 49 pages. In contrast, our actual proof takes less than 3 pages, following from a novel application of the recently developed and powerful method — the \emph{Forbidden Submatching Method}.

The Forbidden Submatching Method is a new technique in combinatorics for finding perfect matchings in hypergraphs while avoiding certain forbidden submatchings, that was recently introduced by the second and fourth author~\cite{DP22} (and also independently introduced as \emph{conflict-free hypergraph matchings} by Glock, Joos, Kim, K\"uhn and Lichev~\cite{GJKKL22}). The latter proceeds via a random greedy process while the former uses the nibble method; we discuss the differences between the two formulations and limitations of the latter in the conclusion. Although the method has only recently been introduced, it has already been applied to resolve a number of fundamental conjectures from many different areas such as Steiner systems, Latin squares, high dimensional permutations and degenerate Tur\'an densities. In~\cite{DP22} and~\cite{GJKKL22}, it was used to prove the existence of approximate high girth Steiner systems answering a conjecture of Glock, K\"uhn, Lo and Osthus~\cite{GKLO20}. In~\cite{DP22BES} and~\cite{GJKKLP22}, it was used to prove a conjecture of Brown, Erd\H{o}s and S\'os~\cite{BES73}. In~\cite{JM22}, it was applied to $(p,q)$-coloring to provide a short proof that $f(n,4,5)=\frac{5n}{6}+o(n)$.

The main contribution of this paper is to demonstrate that the machinery of the Forbidden Submatching Method can be applied to $(p,q)$-coloring in the non-integral regime as well as to various generalizations which we will discuss in the next subsection. Whereas a number of the applications have been mostly immediate applications of the method, we note that Theorem~\ref{thm:main1} is not a straightforward application of the method as the natural ways of encoding the problem into the setting of hypergraph matchings do not meet the required conditions. Indeed, a main innovation of this paper is the introduction of a \emph{potential function} on submatchings to define the right set of forbidden submatchings that do meet the needed conditions. Altogether, we believe that the Forbidden Submatching Method has much promise for future applications.

\subsection{Generalizations and Applications}

We further generalize Theorem~\ref{thm:main1} in three ways simultaneously as follows.

Let $F$ be a graph. An \emph{$(F,q)$-coloring} of a graph $G$ is an edge-coloring of $G$ such that every copy of $F$ in $G$ receives at least $q$ colors. The \emph{generalized Ramsey number} $r(G,F,q)$ is the minimum number of colors in an $(F,q)$-coloring of $G$.

Axenovich, F{\"u}redi, and Mubayi~\cite{AFM00} generalized Theorem~\ref{thm:EG} also via a simple application of the Lov\'asz Local Lemma as follows.
\begin{thm}[Axenovich, F{\"u}redi, and Mubayi~\cite{AFM00}]\label{thm:AFM}
Let $F$ be a fixed graph with $|V(F)|>2$. If $G$ is graph on $n$ vertices  and $q$ is a positive integer with $q \le |E(F)|$, then
$$r(G,F,q) = O\left( n^{\frac{|V(F)|-2}{|E(F)| - q + 1}} \right).$$
\end{thm}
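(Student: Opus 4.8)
The plan is to prove \Cref{thm:AFM} by a direct application of the symmetric Lovász Local Lemma, following the same template Erdős and Gyárfás used for \Cref{thm:EG} but with $p$-cliques replaced by copies of $F$. Write $m := |E(F)|$ and set $C := \lceil c\, n^{\frac{|V(F)|-2}{m-q+1}} \rceil$ for a constant $c = c(F,q)$ to be fixed at the end; note $m - q + 1 \ge 1$ since $q \le |E(F)|$, so this exponent is positive. Color each edge of $G$ independently and uniformly at random with one of $C$ colors. For each copy $K$ of $F$ in $G$, let $A_K$ be the event that the edges of $K$ receive at most $q-1$ colors; the goal is to show $\Pr\big[\bigcap_K \overline{A_K}\big] > 0$, which produces an $(F,q)$-coloring of $G$ using $C$ colors.

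The first step is to bound $\Pr[A_K]$. If $K$ uses at most $q-1$ colors, then the set of colors appearing on $E(K)$ is contained in some fixed $(q-1)$-subset of the palette, so a union bound over these subsets gives $\Pr[A_K] \le \binom{C}{q-1}\left(\frac{q-1}{C}\right)^{m} \le (q-1)^{m}\, C^{-(m-q+1)}$ (the bound being trivially $0$ when $q=1$). The second step is to control dependencies: $A_K$ is determined by the colors on $E(K)$, so it is mutually independent of the collection of all $A_{K'}$ with $E(K') \cap E(K) = \emptyset$. The number of copies of $F$ in $G$ through a fixed edge is at most the number in $K_n$, namely $O(n^{|V(F)|-2})$ (choose the remaining $|V(F)|-2$ vertices and an embedding), so summing over the $m$ edges of $K$ shows each $A_K$ depends on at most $D = O(n^{|V(F)|-2})$ other events, with the implied constant depending only on $F$.

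Finally, I would verify the symmetric LLL hypothesis $e \cdot \Pr[A_K] \cdot (D+1) \le 1$. This reduces to $C^{m-q+1} \ge e\,(q-1)^{m}\cdot O(n^{|V(F)|-2})$, which holds for our choice of $C$ once $c$ is taken to be a sufficiently large constant depending on $F$ and $q$. The Local Lemma then yields, with positive probability, a coloring in which no $A_K$ occurs, i.e.\ an $(F,q)$-coloring of $G$, whence $r(G,F,q) \le C = O\big(n^{\frac{|V(F)|-2}{|E(F)|-q+1}}\big)$.

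There is no genuine obstacle here: this is essentially the same first-moment-plus-LLL computation as in \Cref{thm:EG}, and it is worth emphasizing (as the introduction does) that the real difficulty — and the contribution of this paper — lies not in this local-lemma bound but in improving it by the logarithmic factor via the Forbidden Submatching Method. The only points in the argument above requiring a little care are the probability estimate for $A_K$ (checking the boundary cases $q = |E(F)|$, where the exponent of $C$ degenerates to $1$, and $q = 1$, where the bound is $0$) and the counting of copies of $F$ sharing an edge with $K$, where one uses crucially that $F$ has a bounded number of vertices so that each edge lies in only $O(n^{|V(F)|-2})$ copies.
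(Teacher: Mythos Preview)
Your proposal is correct and matches the paper's approach. The paper does not prove \Cref{thm:AFM} separately (it is cited from~\cite{AFM00}), but it proves the strictly more general \Cref{thm:base} by exactly the same symmetric Local Lemma template you describe: random uniform coloring, the union bound $\Pr[A_T]\le \binom{\cdot}{q-1}\left(\frac{q-1}{t}\right)^{|E(F)|}$, the dependency count $d\le |E(F)|\cdot\binom{n}{|V(F)|-k}$, and the check $ep(d+1)\le 1$.
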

Note that $r(G,F,q)$ is sublinear if $q < |E(F)|-|V(F)|+3$; moreover, when $G=K_n$ and $F$ is connected, Axenovich, F{\"u}redi, and Mubayi further showed that $r(K_n, F, q)=\Theta(n)$ for $q=|E(F)|-|V(F)|+3$. 

So our first generalization is to replace $p$ (more specifically the implied $K_p$) in Theorem~\ref{thm:main1} by a general $F$ with $|V(F)|-2$ not divisible by $|E(F)| - q + 1$. 

Our second generalization is to list coloring. A \emph{$k$-list-assignment} $L$ of the edges of a graph $G$ is a collection of lists $(L(e):e\in E(G))$ such that $|L(e)|\ge k$ for all $e\in E(G)$. The \emph{generalized list Ramsey number} $r_{\ell}(G,F,q)$ is the minimum $k$ such that for every $k$-list-assignment $L$ of $E(G)$ there exists an $(F,q)$-coloring $\phi$ such that $\phi(e)\in L(e)$ for all $e\in E(G)$.  Note that $r(G,F,q)\le r_{\ell}(G,F,q)$ for all $G,F,q$. 

Our third generalization is to hypergraphs.
The definition of $r(G,F,q)$ and $ r_{\ell}(G,F,q)$ naturally generalizes to hypergraphs $G$ and $F$; indeed, Erd\H{o}s and Shelah's original question concerned the generalized Ramsey number for uniform hypergraphs. We note that the Lov\'asz Local Lemma (e.g. see~\cite{AS16}) easily yields the following generalization of Theorem~\ref{thm:AFM} to list coloring and to $k$-uniform hypergraphs.

\begin{thm}\label{thm:base}
Let $k\ge 2$ be a positive integer. Let $F$ be a fixed $k$-uniform hypergraph with $|V(F)|>k$ and let $q$ be a positive integer with $q\leq |E(F)|$. If $G$ is $k$-uniform hypergraph on $n$ vertices, then
$$r_{\ell}(G,F,q) = O\left( n^{\frac{|V(F)|-k}{|E(F)| - q + 1}} \right).$$
\end{thm}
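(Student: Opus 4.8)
The plan is to prove \cref{thm:base} by a direct application of the Lovász Local Lemma, following the template of Erdős--Gyárfás and Axenovich--Füredi--Mubayi but being careful that the argument goes through uniformly for $k$-uniform hypergraphs and for list coloring. Set $C := c\, n^{(|V(F)|-k)/(|E(F)|-q+1)}$ for a constant $c$ to be chosen, and let $k' := |V(F)|$, $m := |E(F)|$. Given an arbitrary $C$-list-assignment $L$ of $E(G)$, color each edge $e \in E(G)$ independently and uniformly at random from $L(e)$ (we may assume $|L(e)| = C$ exactly by discarding colors). For each copy $F'$ of $F$ in $G$, let $A_{F'}$ be the bad event that $F'$ receives at most $q-1$ colors. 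It suffices to show that with positive probability no $A_{F'}$ occurs, since any such coloring is a valid $(F,q)$-coloring with $\phi(e) \in L(e)$.

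The key steps are: (i) \emph{Bounding $\Prob{A_{F'}}$.} If $F'$ uses at most $q-1$ colors, then there is a set $S$ of at most $q-1$ colors and a sub(hyper)graph $F'' \subseteq F'$ with $|E(F')| - (q-1) = m - q + 1$ edges such that every edge of $F''$ is ``repeated'' — more precisely, one argues as in \cite{AFM00} that $A_{F'}$ implies the existence of a set of $m-q+1$ edges of $F'$ each of which receives a color also used by another edge of $F'$; choosing one such witness structure and summing, $\Prob{A_{F'}} \le \binom{m}{q-1}\, m^{m-q+1} \, C^{-(m-q+1)}$, since each of the $m-q+1$ chosen edges must, independently, land on one of at most $m$ prescribed colors, each with probability $1/C$. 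This bound is exactly where the exponent $m-q+1 = |E(F)|-q+1$ enters. (ii) \emph{Dependency degree.} The event $A_{F'}$ is mutually independent of all $A_{F''}$ with $E(F') \cap E(F'') = \emptyset$. The number of copies $F''$ of $F$ sharing an edge with a fixed $F'$ is at most $m \cdot |E(G)| \cdot \binom{n}{k'-k}$-type count; crudely, it is $O(n^{k'-k})$ since fixing one shared edge (at most $k$ vertices) leaves at most $k' - k$ free vertices of $F''$, and there are $O(n^{k'-k})$ ways to place them — here we use $|V(F)| > k$ so this exponent is nonnegative, and the implied constant depends only on $F$. (iii) \emph{Applying the symmetric LLL.} We need $e \cdot \Prob{A_{F'}} \cdot (D+1) \le 1$ where $D = O(n^{k'-k})$. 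With $\Prob{A_{F'}} = O\!\left(C^{-(m-q+1)}\right) = O\!\left(n^{-(k'-k)}\right)$ for a suitable choice of $c$, the product is bounded by a constant that we force below $1$ by taking $c$ large; this completes the argument and gives $r_\ell(G,F,q) \le C$.

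The main obstacle — though it is more bookkeeping than genuine difficulty — is step (i): verifying that the ``few colors'' event really does force $m-q+1$ edges to be individually pinned to a small set of colors, in a way that yields the clean power $C^{-(m-q+1)}$ after a union bound over the $O(1)$-many witness structures, and confirming this reasoning is insensitive to replacing cliques by a general $F$, to $k$-uniformity, and to lists (the last point is immediate, since the per-edge probability $1/|L(e)| \le 1/C$ never used that the lists coincide). Once that is in place, steps (ii) and (iii) are the standard LLL dependency count and arithmetic, and the only role of the hypotheses $k \ge 2$, $|V(F)| > k$, and $q \le |E(F)|$ is to ensure the exponents $k'-k \ge 0$ and $m-q+1 \ge 1$ are in the admissible range so that both the probability bound and the dependency bound are the expected powers of $n$.
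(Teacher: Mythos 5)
Your proof is correct and follows essentially the same approach as the paper: apply the symmetric Lov\'asz Local Lemma to the random $L$-coloring in which each edge independently receives a uniform color from its own list, bound $\Prob{A_{F'}} = O\bigl(C^{-(|E(F)|-q+1)}\bigr)$, and observe that the dependency degree is $O\bigl(n^{|V(F)|-k}\bigr)$. The only cosmetic difference is in how $\Prob{A_{F'}}$ is estimated — you union over witness structures (a choice of $q-1$ representative edges and a repeat-pattern for the other $m-q+1$ edges), whereas the paper unions over the at most $q-1$ colors used, via $\binom{|E(F)|\cdot t}{q-1}\bigl((q-1)/t\bigr)^{|E(F)|}$ — and both give the same asymptotics, so the rest of the LLL calculation is identical.
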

\begin{proof}
When $q=1$, the result is trivial, so we assume that $q\geq 2$. Let $t=C\left( n^{\frac{|V(F)|-k}{|E(F)| - q + 1}} \right)$ (where $C$ is a large constant to be chosen later), and without loss of generality, let $L$ be an arbitrary $t$-list-assignment of $E(G)$ with $|L(e)|=\ell$.
Color each edge $e\in G$ independently with colors from its own list $L(e)$, where the colors are assigned with equal probability, i.e., $1/\ell$. For each copy $T$ of $F$ in $G$, let $\mathcal{A}_T$ be the event that $T$ receives at most $q-1$ colors; clearly
\[
\Prob{\mathcal{A}_T} \le p:=\binom{|E(F)|\cdot t}{q-1}\left(\frac{q-1}{t}\right)^{|E(F)|}\sim t^{-|E(F)| + q-1}.
\]
Note that two events $\mathcal{A}_T$ and $\mathcal{A}_{T'}$ are dependent only if $E(T)\cap E(T')\neq \emptyset$. Therefore each event $\mathcal{A}_T$ is mutually
independent of a set of all the other events but at most 
\[
d\le |E(F)|\cdot \binom{n}{|V(F)|-k} \sim n^{|V(F)|-k}.
\]
Choosing $C$ sufficiently large we obtain $ep(d+1) \le 1$. It follows from the symmetric case of the Local Lemma that with positive probability there exists a $(F,q)$-coloring, thereby completing the proof.
\end{proof}

Here then is our general main result which improves the bound in Theorem~\ref{thm:base} by a logarithmic factor in the non-integral regime. 

\begin{thm}\label{thm:main2}
Let $k\ge 2$ be a positive integer. Let $F$ be a fixed $k$-uniform hypergraph with $|V(F)|>k$ and let $q$ be a positive integer with $q\leq |E(F)|$ and $|V (F)|-k$ not divisible by $|E(F)|-q +1$. If $G$ is a $k$-uniform hypergraph on $n$ vertices, then 
$$ r_{\ell}(G,F,q) = O\left( \left(\frac{n^{|V(F)|-k}}{\log n}\right)^{\frac{1}{|E(F)| - q + 1}} \right)
.$$
\end{thm}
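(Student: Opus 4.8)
The plan is to deduce \cref{thm:main2} from the Forbidden Submatching Method of~\cite{DP22} (equivalently, the conflict-free hypergraph matching theorem of~\cite{GJKKL22}). Write $f:=|V(F)|$, $m:=|E(F)|$ and $a:=m-q+1\ge 2$, and aim for $t$ colours with $t$ of the order claimed in the statement; fixing an arbitrary $t$-list-assignment and shrinking each list, we may assume $|L(e)|=t$ for all $e\in E(G)$. I would model an $L$-colouring of $E(G)$ as a perfect matching of the (degenerate) hypergraph $\mathcal H$ on vertex set $E(G)$ carrying, for each $e\in E(G)$ and each $c\in L(e)$, a loop $\hat e_{e,c}$ at $e$; then $\mathcal H$ is $t$-regular with all codegrees $0$, its perfect matchings are exactly the $L$-colourings, and once a suitable conflict system is fixed, its conflict-free perfect matchings will be exactly the $(F,q)$-colourings respecting $L$. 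The cited theorem then outputs such a colouring with $t$ colours, proving \cref{thm:main2}; \cref{thm:main1} is the special case $k=2$, $F=K_p$.

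Everything hinges on choosing the conflict system, and the obvious choice is too concentrated. If, for each copy $F'$ of $F$ and each colouring of $E(F')$ using at most $q-1$ colours, one forbids the corresponding $m$-set of loops, then the number of such conflicts through a fixed loop $\hat e_{e,c}$ is of order $n^{f-k}t^{\,q-2}$, which overshoots the threshold $t^{\,m-1}$ demanded by the method by a factor of order $n^{f-k}/t^{a}\asymp\log n$. Passing to smaller subconfigurations does not help: in a copy of $F$ using \emph{exactly} $q-1$ colours --- equivalently, with (number of edges) minus (number of colours) equal to exactly $a$ --- the same difference remains $a$ on every coloured subconfiguration, so none of them is spread out enough, at any size, to be legally forbidden. (Copies using at most $q-2$ colours cause no trouble: for them the difference is at least $a+1$ and the associated conflicts are suitably spread.) This is precisely where the integral regime behaves differently --- there matching lower bounds of Erd\H{o}s--Gy\'arf\'as type show the exponent is attained --- and where the hypothesis $a\nmid(f-k)$ must be brought in.

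The way around this is the \emph{potential function}: rather than forbidding subgraphs of individual few-coloured copies of $F$, one assigns a potential $\psi$ to partially coloured subconfigurations of $G$ that also records the surrounding few-coloured structure, and forbids exactly the configurations on which $\psi$ exceeds a chosen threshold. One then needs this threshold calibrated so that (i) avoiding every such configuration forces each copy of $F$ to receive at least $q$ colours, while (ii) for each $j$ the number of forbidden configurations of size $j$ through a fixed loop is $O(t^{\,j-1-\varepsilon_0})$, and through a fixed pair of loops is $O(t^{\,j-2-\varepsilon_0})$, for some fixed $\varepsilon_0>0$; writing $f-k=sa+b$ with $1\le b\le a-1$, the non-divisibility is exactly what makes the exponent comparisons underlying (ii) strict rather than borderline, opening the room for such a calibration to exist. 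I expect establishing (i) and (ii) together --- producing one threshold that catches all $(q-1)$-coloured copies of $F$ yet keeps every conflict size polynomially below the $t^{\,j-1}$ bar, uniformly over all $n$-vertex $k$-uniform $G$ --- to be the main obstacle. Once that is in place, the regularity and (vacuous) codegree hypotheses on $\mathcal H$ are immediate, and the conflict-free matching theorem of~\cite{DP22,GJKKL22} applies directly to complete the proof.
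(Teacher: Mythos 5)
Your proposal correctly identifies the high-level strategy used in the paper --- encode $L$-colourings as matchings of an auxiliary hypergraph, forbid bad submatchings via the Forbidden Submatching Method, and introduce a potential function on submatchings so that the set of forbidden configurations satisfies the required degree and codegree bounds, with the non-divisibility hypothesis entering in the codegree verification. However, the proposal does not actually \emph{construct} the potential, and it explicitly defers the step you call ``the main obstacle.'' That step is the paper's key technical contribution. Concretely, with $\vc=|V(F)|$, $\ec=|E(F)|$, the paper sets
\[
\rho(M):= |M| - |C(M)| - \bigl( |V(M)|-k\bigr)\cdot \frac{\ec-q+1}{\vc-k},
\]
and defines $H$ to consist of the inclusion-minimal matchings $M$ of size at least $2$ that lie inside a single copy of $F$ and satisfy $\rho(M)\ge 0$. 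Minimality then forces any proper subconfiguration $S$ to have $\rho(S)<0$, and since $\rho(S)\cdot(\vc-k)$ is an integer this yields $\rho(S)\le -1/(\vc-k)$; this quantitative slack is exactly what drives the $\Delta_{s,t}$ verification, and the non-divisibility hypothesis enters separately to bound $\Delta_2(H)$ (via $\lfloor(\vc-k)/(\ec-q+1)\rfloor\le(\vc-k-1)/(\ec-q+1)$). None of this calibration appears in your write-up, so as it stands there is a genuine gap.

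Two further points. First, your heuristic for why naive subconfigurations fail is incorrect as stated: the quantity $|M|-|C(M)|$ does \emph{not} remain equal to $a=\ec-q+1$ on subconfigurations of a $(q-1)$-coloured copy of $F$ --- it generally shrinks. The real issue is that $|M|-|C(M)|$ alone, without the $|V(M)|$ term, is the wrong measure of ``spread,'' which is exactly what the vertex-count correction in $\rho$ fixes. Second, you treat the theorems of~\cite{DP22} and~\cite{GJKKL22} as interchangeable, but the paper points out they are not for this purpose: the conflict-free matching theorem of~\cite{GJKKL22} only produces an \emph{almost}-perfect matching, i.e.\ it colours only almost all edges of $G$, whereas what is needed is the bipartite $A$-perfect formulation from~\cite{DP22} so that every edge of $G$ receives a colour. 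Your loop-based encoding does not sidestep this; one still has to invoke the $A$-perfect version.
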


As the first author, Dudek, and English noted in~\cite{BDE22}, upper bounds on generalized Ramsey numbers have implications for the lower bounds of a problem of Brown, Erd\H{o}s and S\'os~\cite{BES73} as follows. Let $F^{(k)}(n; j, i)$ denote the minimum number of edges $m$ such that every $k$-uniform hypergraph on $n$ vertices and $m$ edges has a set of $j$ vertices spanning at least $i$ edges.  In this notation, the second and fourth authors~\cite{DP22BES} recently showed the following, confirming a conjecture of Brown, Erd\H{o}s and S\'os~\cite{BES73} for 3-uniform hypergraphs.
\begin{thm}[Delcourt and Postle~\cite{DP22BES}]
For all $j \geq 4$, 
$$\lim_{n \rightarrow \infty} n^{-2}F^{(3)}(n; j, j-2)$$ exists.
\end{thm}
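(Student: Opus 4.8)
The strategy is to show that the normalized sequence $a_n := n^{-2}F^{(3)}(n;j,j-2)$ is (asymptotically) both bounded and ``super-additive up to lower order terms,'' so that a Fekete-type argument forces convergence. First I would recall that $F^{(3)}(n;j,j-2)$ counts the threshold number of edges guaranteeing $j$ vertices spanning $j-2$ edges, and that a $3$-uniform hypergraph $H$ on $n$ vertices with \emph{no} such configuration is exactly one in which every set of $j$ vertices spans at most $j-3$ edges; such hypergraphs are the extremal objects, and $F^{(3)}(n;j,j-2)-1$ is the maximum number of edges over all of them. The plan is therefore to analyze $g(n) := F^{(3)}(n;j,j-2)-1$, the Brown--Erd\H{o}s--S\'os extremal number, and prove $\lim_{n\to\infty} g(n)/n^2$ exists, which is equivalent to the stated claim.

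\textbf{Lower bound via upper bounds on generalized Ramsey numbers.} The key link, as indicated in the excerpt following Theorem~\ref{thm:base}, is that a good $(F,q)$-coloring of $K_n^{(3)}$ yields a dense configuration-free hypergraph: roughly, if $K_n^{(3)}$ admits a coloring with few colors in which every copy of the relevant small hypergraph sees many colors, then a densest color class is a large hypergraph avoiding the Brown--Erd\H{o}s--S\'os configuration. Concretely I would take $F$ to be a $3$-uniform hypergraph on $j$ vertices with $j-2$ edges and $q = j-3$ (so $|E(F)|-q+1 = 2$ and $|V(F)|-k = j-3$), check the non-divisibility hypothesis $2 \nmid (j-3)$, i.e.\ $j$ even, and invoke Theorem~\ref{thm:main2} in the cases where it applies to get $F^{(3)}(n;j,j-2) = \Omega(n^2/\text{polylog})$-type estimates; in the remaining parity class one uses Theorem~\ref{thm:base} together with the matching lower-bound constructions from~\cite{DP22BES}. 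The upshot is $g(n)/n^2$ stays bounded away from $0$ and $\infty$.

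\textbf{Super-additivity and Fekete.} For the existence of the limit I would establish an approximate super-additivity: $g(m+n) \ge g(m) + g(n) - o((m+n)^2)$, or more robustly $g(n) \ge g(m)\cdot(n/m)^2 - o(n^2)$ for all fixed $m$ and $n\to\infty$. This comes from a blow-up/random-sparsification argument: given an extremal configuration-free hypergraph $H$ on $m$ vertices, one takes a balanced blow-up of $H$ to $n$ vertices and then carefully adds a few edges or deletes a few to repair any new copies of the forbidden configuration created inside blown-up vertex classes, losing only $o(n^2)$ edges; the resulting hypergraph on $n$ vertices is configuration-free and has $\approx (n/m)^2 g(m)$ edges. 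Combined with the boundedness from the previous step, a standard Fekete-type lemma for approximately super-additive sequences (e.g.\ $\liminf g(n)/n^2 \ge \sup_m g(m)/m^2$ while trivially $\limsup g(n)/n^2 \le \sup_m g(m)/m^2$) yields that $\lim_{n\to\infty} g(n)/n^2$ exists and equals $\sup_m g(m)/m^2$. Rescaling back to $F^{(3)}(n;j,j-2)$ completes the proof.

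\textbf{Main obstacle.} The delicate point is the repair step in the blow-up argument: a naive blow-up of a configuration-free hypergraph is typically \emph{not} configuration-free, because $j$ vertices distributed across the classes can span too many edges once parallel copies of an edge of $H$ appear. Controlling the number and structure of these new dense $j$-sets, and showing they can be destroyed by removing only $o(n^2)$ edges (rather than a constant fraction), is where the real work lies; here one would use that each ``bad'' $j$-set must use at least two vertices in a common class, so such sets are rare, and a bounded-degree deletion argument (or an application of the forbidden submatching machinery itself, as in~\cite{DP22BES}) suffices to clean them up while preserving asymptotic density.
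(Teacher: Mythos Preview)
This theorem is not proved in the present paper; it is quoted from~\cite{DP22BES} as background. So there is no ``paper's own proof'' here to compare against, only the external reference. That said, your proposal contains a genuine gap that would prevent the argument from going through even as a standalone proof.

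The problem is the repair step in your blow-up construction. Take already the simplest case $j=4$, where $(j,j-2)$-free means \emph{linear} (any two edges meet in at most one vertex). If $H$ is an extremal linear $3$-graph on $m$ vertices with $g(m)\sim m^2/6$ edges and you blow up each vertex to a class of size $s=n/m$, a single edge of $H$ becomes a complete tripartite block $K^{(3)}_{s,s,s}$ with $s^3$ triples, and any linear subhypergraph of such a block has at most $s^2$ triples. Thus even after optimal cleanup you retain at most $g(m)\cdot s^2=g(m)(n/m)^2$ edges \emph{inside} the blocks, but you must in fact delete almost everything: you go from $s^3$ down to $s^2$ per block, i.e.\ you lose a $1-1/s$ fraction. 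If instead you first randomly sparsify the blow-up to the target density $\Theta(n^2)$ (keeping each triple with probability $p\asymp m/n$), a short second-moment computation shows the expected number of pairs of surviving triples sharing two vertices is $\Theta(n^2)$, the same order as the number of surviving edges; moreover each surviving edge lies in $\Theta(1)$ such bad pairs, so any cleanup deletes a constant fraction of the edges. Hence you only get $g(n)\ge c\cdot g(m)(n/m)^2 - o(n^2)$ for some constant $c<1$ depending on the target density, which is not the inequality $g(n)\ge g(m)(n/m)^2 - o(n^2)$ you need for the Fekete step. Your ``bad $j$-sets are rare because they use two vertices in a common class'' heuristic is exactly what fails: those sets are not rare, they are of order $n^2$.

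For context, the actual argument in~\cite{DP22BES} does not go via blow-ups at all. It builds near-extremal $(j,j-2)$-free hypergraphs on $n$ vertices \emph{directly} for each large $n$ using the Forbidden Submatching Method (the machinery behind Theorem~\ref{thm:SmallCodegree}), with the forbidden configurations encoding the $(j,j-2)$ constraint, and matches this construction against an upper bound to pin down the limiting constant. The point is precisely that one cannot recycle a fixed small extremal example to manufacture large ones with the correct leading constant; a fresh construction at every scale is needed.
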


For general $i,j,k$, Brown, Erd\H{o}s, and S\'os~\cite{BES73} showed that $F^{(k)}(n; j, i) = \Omega\left(n^{\frac{k\cdot i - j}{i-1}}\right)$.  For graphs, the first author, Dudek, and English~\cite{BDE22} observed that $F^{(2)}(n; j, i) \geq \frac{{n \choose 2}}{f\left(n,~j,~{j \choose 2} -i+2\right)}$, since a $\left(j, {j\choose 2}-i+2\right)$-coloring of $K_n$ contains no set of $j$ vertices which spans $i$ edges of the same color (and consequently every color class has at most $F^{(2)}(n; j, i)$ edges).
This together with Theorem~\ref{thm:BDE} improves the lower bound of $F^{(2)}(n; j, i)$ to $\Omega\left(\left(n^{2i - j} \log n\right)^{\frac{1}{i-1}}\right)$ for all $i \geq \frac{j^2+24j-47}{4}$.

Similarly for $k$-uniform hypergraphs, one can observe that for general $i, j$,
$$F^{(k)}(n; j, i) \geq \frac{{n \choose k}}{r\left(K_n^{(k)}, K_j^{(k)}, {j \choose k} -i+2\right)}.$$  Hence, Theorem~\ref{thm:main2} implies the following.

\begin{thm}\label{thm:bes}
For all fixed $i,j,k$ with $ki-j$ not divisible by $i-1$, 
$$F^{(k)}(n; j, i) = \Omega\left(\left(n^{k\cdot i - j} \log n\right)^{\frac{1}{i-1}}\right).$$
\end{thm}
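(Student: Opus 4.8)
The plan is to obtain Theorem~\ref{thm:bes} as an immediate corollary of Theorem~\ref{thm:main2} through the counting bound recorded just above its statement; there is no new idea here, only bookkeeping, so I will be explicit about the two places where one must be slightly careful.

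\emph{Step 1: the reduction.} Assuming $2\le i\le\binom{j}{k}$ so that all quantities are meaningful, I would first verify
$$F^{(k)}(n;j,i) \;\ge\; \frac{\binom{n}{k}}{\,r\!\left(K_n^{(k)},\,K_j^{(k)},\,\binom{j}{k}-i+2\right)}\,.$$
Write $r$ for the denominator and fix an optimal $\bigl(K_j^{(k)},\binom{j}{k}-i+2\bigr)$-coloring $\phi$ of $K_n^{(k)}$ using $r$ colors. For any set $S$ of $j$ vertices, the $\binom{j}{k}$ edges inside $S$ receive at least $\binom{j}{k}-i+2$ colors, so the most popular color on $S$ appears on at most $\binom{j}{k}-\bigl(\binom{j}{k}-i+2-1\bigr)=i-1$ of these edges. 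Hence every color class of $\phi$, regarded as a $k$-uniform hypergraph on $n$ vertices, contains no set of $j$ vertices spanning $i$ edges, and therefore has at most $F^{(k)}(n;j,i)-1$ edges. Summing over the $r$ classes, which partition $E(K_n^{(k)})$, gives $\binom{n}{k}\le r\bigl(F^{(k)}(n;j,i)-1\bigr)$, which is the displayed inequality up to the harmless $-1$.

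\emph{Step 2: applying Theorem~\ref{thm:main2}.} Next I would apply Theorem~\ref{thm:main2} with $F=K_j^{(k)}$, $G=K_n^{(k)}$, and $q=\binom{j}{k}-i+2$, which lies in $\{2,\dots,\binom{j}{k}\}$ and is thus an admissible choice. Here $|V(F)|=j>k$, $|E(F)|=\binom{j}{k}$, and $|E(F)|-q+1=i-1$. The divisibility hypothesis transfers cleanly: since $i\equiv1\pmod{i-1}$ we have $ki-j\equiv k-j\equiv-(j-k)\pmod{i-1}$, so the condition ``$ki-j$ not divisible by $i-1$'' is equivalent to ``$|V(F)|-k=j-k$ not divisible by $|E(F)|-q+1$'', exactly as required. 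Theorem~\ref{thm:main2} then gives
$$r_{\ell}\!\left(K_n^{(k)},K_j^{(k)},q\right)=O\!\left(\left(\frac{n^{\,j-k}}{\log n}\right)^{\frac{1}{i-1}}\right),$$
and since $r(\cdot)\le r_\ell(\cdot)$ always holds, the same bound controls $r\bigl(K_n^{(k)},K_j^{(k)},q\bigr)$.

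\emph{Step 3: combining, and the (non-)obstacle.} Substituting the Step~2 estimate into Step~1,
$$F^{(k)}(n;j,i)=\Omega\!\left(\frac{n^{k}}{\bigl(n^{\,j-k}/\log n\bigr)^{1/(i-1)}}\right)=\Omega\!\left(n^{\,k-\frac{j-k}{i-1}}\,(\log n)^{\frac{1}{i-1}}\right),$$
and since $k-\frac{j-k}{i-1}=\frac{k(i-1)-(j-k)}{i-1}=\frac{ki-j}{i-1}$, the right-hand side is precisely $\Omega\bigl((n^{ki-j}\log n)^{1/(i-1)}\bigr)$, as claimed. The only steps demanding attention are the translation of the divisibility condition (done above) and the check that $q=\binom{j}{k}-i+2$ is a legitimate parameter for Theorem~\ref{thm:main2}; both are routine. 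I do not anticipate any genuine obstacle, since all of the real work is already packaged inside Theorem~\ref{thm:main2}.
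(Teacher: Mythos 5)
Your proposal is correct and follows exactly the paper's approach: the paper states the reduction $F^{(k)}(n;j,i) \ge \binom{n}{k}/r\bigl(K_n^{(k)},K_j^{(k)},\binom{j}{k}-i+2\bigr)$ as an observation and then says ``Hence, Theorem~\ref{thm:main2} implies the following,'' and you have simply filled in the bookkeeping (the color-class counting argument, the identity $|E(F)|-q+1=i-1$, the translation $ki-j\equiv-(j-k)\pmod{i-1}$, and the exponent arithmetic), all of which check out.
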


We note that Theorem~\ref{thm:bes} improves an earlier result of Shangguan and Tamo~\cite{shangguan2020sparse}, where they proved the same result but under the assumption that $ki-j$ and $i-1$ are relatively prime. Moreover, Theorem~\ref{thm:bes} is the best possible in terms of the range of $k,j,i$, as it was shown by~\cite{BES73} that $F^{(k)}(n; j, i)=\Theta\left(n^{(ki-j)/(i-1)}\right)$ for $i-1 \mid ki-j$.

\section{Proof of Theorem~\ref{thm:main2}}

As mentioned in the introduction, Theorem~\ref{thm:main2} will follow as an application of the main theorem of~\cite{DP22} by the second and fourth authors where they developed a very general theory of finding matchings in hypergraphs avoiding forbidden submatchings. In order to state that theorem, we first need a few definitions. 

\begin{definition}[Configuration Hypergraph]
Let $\Ag$ be a (multi)-hypergraph. We say a hypergraph $H$ is a \emph{configuration hypergraph} for $\Ag$ if $V(H)=E(\Ag)$ and $E(H)$ consists of a set of matchings of $\Ag$ of size at least two. We say a matching of $\Ag$ is \emph{$H$-avoiding} if it spans no edge of $H$.
\end{definition}

\begin{definition}[Bipartite Hypergraph]
We say a hypergraph $J=(A,B)$ is \emph{bipartite with parts $A$ and $B$} if $V(J)=A\cup B$ and  every edge of $J$ contains exactly one vertex from $A$. We say a matching of $J$ is \emph{$A$-perfect} if every vertex of $A$ is in an edge of the matching.
\end{definition}

\begin{definition}
Let $H$ be a hypergraph. The \emph{$i$-degree} of a vertex $v$ of $H$, denoted $d_{H,i}(v)$, is the number of edges of $H$ of size $i$ containing $v$. The \emph{maximum $i$-degree} of $H$, denoted $\Delta_i(H)$, is the maximum of $d_{H,i}(v)$ over all vertices $v$ of $H$.

The \emph{$s$-codegree} of a set of vertices $S$ of $H$, denoted $d_{H,s}(S)$ is the number of edges of $H$ of size $s$ containing $S$. The \emph{maximum $(s,t)$-codegree} of $H$ is $$\Delta_{s,t}(H) := \max_{S\in \binom{V(H)}{t}} d_{H,s}(S).$$
We define the \emph{common $2$-degree} of distinct vertices $u, v\in V(H)$ as $|\{w\in V(H): uw, vw\in E(H)\}|$. Similarly, we define the \emph{maximum common $2$-degree} of $H$ as the maximum of the common $2$-degree of $u$ and $v$ over all distinct pairs of vertices $u,v$ of $H$.
\end{definition}

\begin{definition}
Let $\Ag$ be a hypergraph and let $H$ be a configuration hypergraph of $\Ag$. We define the \emph{$i$-codegree} of a vertex $v\in V(\Ag)$ and $e\in E(\Ag)=V(H)$ with $v\notin e$ as the number of edges of $H$ of size $i$ who contain $e$ and an edge incident with $v$. We then define the \emph{maximum $i$-codegree} of $\Ag$ with $H$ as the maximum $i$-codegree over vertices $v\in V(\Ag)$ and edges $e\in E(\Ag)=V(H)$ with $v\notin e$. 
\end{definition}

We are now ready to state the main theorem from~\cite{DP22}.

\begin{thm}\label{thm:SmallCodegree}[Theorem 1.16 from~\cite{DP22}]
For all integers $r_1,r_2 \ge 2$ and real $\beta \in (0,1)$, there exist an integer $D_{\beta}\ge 0$ and real $\alpha > 0$ such that following holds for all $D\ge D_{\beta}$: 
\vskip.05in
Let $\Ag=(A,B)$ be a bipartite $r_1$-bounded (multi)-hypergraph with codegrees at most $D^{1-\beta}$ such that every vertex in $A$ has degree at least $(1+D^{-\alpha})D$ and every vertex in $B$ has degree at most $D$. Let $H$ be an $r_2$-bounded configuration hypergraph of $\Ag$ with $\Delta_i(H) \le \alpha \cdot D^{i-1}\log D$ for all $2\le i\le r_2$ and $\Delta_{s,t}(H) \le D^{s-t-\beta}$ for all $2\le t< s\le r_2$. If the maximum $2$-codegree of $\Ag$ with $H$ and the maximum common $2$-degree of $H$ are both at most $D^{1-\beta}$, then there exists an $H$-avoiding $A$-perfect matching of $\Ag$ and indeed even a set of $D$ disjoint $H$-avoiding $A$-perfect matchings of $\Ag$.
\end{thm}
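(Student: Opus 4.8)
The plan is to prove Theorem~\ref{thm:SmallCodegree} by the semi-random \emph{nibble} method, iterated, with the $H$-avoidance built into every step and the degree surplus $D^{-\alpha}$ on $A$ reserved so that the matching can be extended onto \emph{every} vertex of $A$. One fixes $\varepsilon>0$ small in terms of $\beta,r_1,r_2$, then $\alpha>0$ small in terms of $\varepsilon,\beta,r_1,r_2$, then $D_\beta$ large. Starting from $\Ag_0:=\Ag$ and $M_0:=\emptyset$, one nibble step on the residual $\Ag_t$, with current matching $M_t$ and target degree $D_t$, is: activate each edge of $\Ag_t$ independently with probability $\varepsilon/D_t$; from the activated edges discard every edge meeting another activated edge, and discard every edge $e$ for which $M_t$ together with the remaining activated edges spans an edge of $H$ through $e$; let $N_t$ be the surviving activated matching and set $M_{t+1}:=M_t\cup N_t$; finally obtain $\Ag_{t+1}$ by deleting all vertices covered by $N_t$, all edges through them, and every remaining edge $e$ that has become \emph{blocked}, meaning $e$ lies in a configuration $S\in E(H)$ with $S\setminus\{e\}\subseteq M_{t+1}$. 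By construction $M_t$ is an $H$-avoiding matching of $\Ag$ throughout.

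The technical core is a one-step lemma: conditioned on $(\Ag_t,M_t,H_t)$ satisfying the hypotheses of the theorem with parameter $D_t\ge D_\beta$ (where $H_t$ is $H$ restricted to the still-available edges), with probability $1-\exp(-D_t^{\Omega(1)})$ the triple $(\Ag_{t+1},M_{t+1},H_{t+1})$ again satisfies all the hypotheses with a parameter $D_{t+1}$ smaller than $D_t$ by a factor $1-\Theta(\varepsilon)$. I would prove this by first-moment estimates for each tracked quantity followed by concentration. The expectations are routine: an available edge survives the matching stage with the usual Pippenger--Spencer probability $e^{-\Theta(\varepsilon)}$, computed by inclusion--exclusion against the at most $D_t^{1-\beta}$ codegrees, and it survives the blocking stage with probability $1-o(1)$, since the expected number of size-$i$ configurations through a fixed edge completed in one step is at most $\Delta_i(H_t)(\varepsilon/D_t)^{i-1}$ plus lower-order terms controlled by $\Delta_{s,t}(H_t)\le D_t^{s-t-\beta}$, and summing over $2\le i\le r_2$ gives $O(\alpha\varepsilon\log D_t)$. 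For concentration I would apply Talagrand's inequality (or a bounded-differences inequality) on the product space of the independent edge activations: each of the $O(|V(\Ag)|+|V(H)|)$ tracked quantities is suitably Lipschitz with a short certificate, hence within $D_t^{1-\delta}$ of its mean except with probability $\exp(-D_t^{\Omega(1)})$, and one union bounds.

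Iterating the one-step lemma for $T=\Theta(\varepsilon^{-1}\log D)$ steps drives $D_t$ down to $\Theta(D_\beta)$ while leaving only an $e^{-\Omega(\varepsilon T)}$-fraction of $A$ uncovered. The subtle point is that the lower bound $(1+D_t^{-\alpha})D_t$ on the $A$-degrees must be preserved at every step, and this is exactly where the $\log D$ slack in $\Delta_i(H)\le\alpha D^{i-1}\log D$ is spent: the configuration-induced degree loss at any $A$-vertex, summed over all $T$ steps, totals $O\big(\alpha(\log D)^{O(1)}D^{1-\alpha'}\big)$ for some $\alpha'>\alpha$ (using the codegree bounds to discard configurations in an advanced state of completion), which for $\alpha$ small is beaten by the available, also geometrically shrinking, surplus; so the surplus invariant propagates. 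When $D_t$ has shrunk to a constant, the set of uncovered $A$-vertices is small and well spread, and it is finished off --- extending the matching onto all of $A$ --- by a direct greedy/probabilistic completion that still avoids $H$, the configuration hypergraph having by then become genuinely sparse. To upgrade a single $H$-avoiding $A$-perfect matching to $D$ pairwise disjoint ones, I would carry out the nibble with each activation assigned one of $D$ colours and process the colour classes as separate matchings; the same estimates, with an extra factor $D$ absorbed into the constants, show that whp all $D$ matchings are simultaneously $A$-perfect and $H$-avoiding.

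The step I expect to be the main obstacle is precisely this preservation of the $A$-degree lower bound against the accumulated blocking losses: the hypothesis $\Delta_i(H)\le\alpha D^{i-1}\log D$ sits at the borderline of what the nibble can tolerate, so the concentration in the one-step lemma must be pushed to additive errors of order $D_t^{1-\delta}$ with $\delta$ strictly larger than $\alpha$, and the bookkeeping that the (geometrically summed) per-step losses stay below the (geometrically shrinking) surplus has to be done carefully. The conditions $\Delta_{s,t}(H)\le D^{s-t-\beta}$ and the $D^{1-\beta}$ bounds on the ordinary codegree, on the $2$-codegree of $\Ag$ with $H$, and on the common $2$-degree of $H$ all enter exactly here, ensuring that every collision term and every partially completed configuration term is lower order and hence harmless.
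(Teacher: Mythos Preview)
The paper does not prove Theorem~\ref{thm:SmallCodegree}. It is quoted verbatim as Theorem~1.16 from~\cite{DP22} and used as a black box in the proof of Theorem~\ref{thm:main2}; the only information the present paper gives about its proof is the remark in the concluding section that \cite{DP22} proceeds via the nibble method (as opposed to the random greedy process of~\cite{GJKKL22}). So there is no ``paper's own proof'' to compare your proposal against.

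That said, your sketch is in the right spirit: an iterated nibble on the residual hypergraph, with $H$-blocked edges deleted at each step, concentration via a Talagrand-type inequality, and a final cleanup on the leftover $A$-vertices is indeed the architecture one expects for such a result. Two cautions if you intend to flesh this out. First, your accounting of the per-step blocking loss as $O(\alpha\varepsilon\log D_t)$ is the crux, and as you yourself flag, the hypothesis $\Delta_i(H)\le \alpha D^{i-1}\log D$ is borderline: summed over $T=\Theta(\varepsilon^{-1}\log D)$ steps this is $\Theta(\alpha\log^2 D)$ total fractional loss, which is not obviously beaten by a surplus of order $D^{-\alpha}$ without a more careful argument showing the $\log D_t$ factor does not persist multiplicatively across rounds. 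Second, the ``finish off greedily once $D_t$ is constant'' step is where the bipartite $A$-perfect formulation really matters, and it is not automatic; one typically needs to reserve a random subset of $B$ at the outset (an absorber or template) so that after the nibble every uncovered $A$-vertex still has many available, $H$-safe edges into the reserved set. Your sketch gestures at this but does not set it up. These are exactly the places where~\cite{DP22} does the real work, so if you want a complete proof you should consult that paper directly.
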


Theorem~\ref{thm:SmallCodegree} is very general:~namely, it simultaneously generalizes the famous matching theorem of Pippenger and Spencer~\cite{PS89} (in the case when $H$ is empty albeit with weaker codegree assumptions) and (vertex) independent set results for girth five hypergraphs or hypergraphs with small codegrees, for example the celebrated result of Ajtai, Koml{\'o}s, Pintz, Spencer, and Szemer{\'e}di~\cite{AKPSS82} (in the case when $J$ is a matching). 

Indeed, the bipartite hypergraph $A$-perfect matching formulation of Theorem~\ref{thm:SmallCodegree} is stronger than all of the above as it even implies the coloring and list coloring versions of those results. Moreover, we will need the full strength of this formulation to prove Theorem~\ref{thm:main2} as will become clear from the proof.

To prove Theorem~\ref{thm:main2}, we will apply Theorem~\ref{thm:SmallCodegree} to certain well-chosen $J$ and $H$ to find an $H$-avoiding $A$-perfect matching of $J$ which will correspond to an $(F,q)$-coloring of $G$. The main idea is to set $A$ to be the edges of $G$ and define the edges of $J$ to correspond to edge-color pairs $(e,c)$ where $c\in L(e)$ in such a way that $J$ is a graph that is the disjoint union of stars. Thus an $A$-perfect matching of $J$ will correspond to an $L$-coloring of the edges of $G$.

To enforce that the coloring is an $(F,q)$-coloring we would naturally define our edges of $H$ to be the sets of edge-color pairs which would correspond to a coloring of a copy of $F$ with at most $q-1$ colors. By setting these to be the forbidden configurations, it follows that an $H$-avoiding $A$-perfect matching of $J$ corresponds to an $(F,q)$-coloring of $G$.

It turns out that the gained logarithmic factor in Theorem~\ref{thm:main2} precisely results from the $\log D$ factor in Theorem~\ref{thm:SmallCodegree}. Unfortunately, this natural definition of $H$ does not satisfy the small codegree conditions of Theorem~\ref{thm:SmallCodegree}, namely that $\Delta_{s,t}(H) \le D^{s-t-\beta}$. Indeed, there can be subconfigurations of the bad configurations whose codegrees are too large. 

Thus the main innovation in our proof of Theorem~\ref{thm:main2} is to define a \emph{potential function} over all possible subconfigurations and then define $H$ to be the set of minimal subconfigurations with potential at least that of the $(q-1)$-colored copies of $F$. The small codegree condition will then follow by the minimality of $H$. Importantly, we are thus using that $H$ in Theorem~\ref{thm:SmallCodegree} is allowed to have mixed uniformities. Intriguingly, the only place we use that $|V(F)|-k$ is not divisible by $|E(F)|-q+1$ is to show that the maximum $2$-degree of $H$ is small (and so are the maximum $2$-codegree of $J$ with $H$ and the common $2$-degree of $H$).

We are now prepared to prove Theorem~\ref{thm:main2} as follows.

\begin{proof}[Proof of Theorem~\ref{thm:main2}]
For ease of notation, let $\vc:= |V(F)|$ and $\ec:= |E(F)|$. Set $T:= \left\lceil C\cdot \left(\frac{n^{\vc-k}}{\log n}\right)^{\frac{1}{\ec - q + 1}} \right\rceil
$. Note we will choose $n, C$ sufficiently large to satisfy various inequalities that appear throughout the proof. Let $L$ be a $T$-list-assignment of $E(G)$. We assume without loss of generality that $|L(e)|=T$ for all $e\in E(G)$.

We define a bipartite graph $J=(A,B)$ as follows:
\begin{itemize}\itemsep.05in
\item $A:=E(G)$
\item $B:= \{(e,c): e\in E(G),~c\in L(e)\}$
\item $E(J) := \big\{ \{e,(e,c)\}: e\in E(G),~c\in L(e)\big\}.$
\end{itemize}
For convenience of notation, we will refer to the edge $\{e,(e,c)\}$ as simply $(e,c)$.

Let $\beta := \frac{1}{2(p-k)}$, $r_1:=2$ and $r_2:=\ec$. Recall that $p>k$ by assumption and hence $\beta > 0$. Let $D_{\beta}$ and $\alpha$ be as in Theorem~\ref{thm:SmallCodegree} for $r_1,r_2$ and $\beta$. Let $D:= \frac{T}{2}$.  Since $n$ and $C$ are large enough, we have that $D\ge D_{\beta}$. 

Note that $J$ is a bipartite $2$-bounded hypergraph with codegrees at most $1 \le D^{1-\beta}$ such that every vertex in $A$ has degree at least $T = 2D \ge (1+D^{-\alpha})D$ (since $\alpha > 0$) and every vertex in $B$ has degree at most $1\le D$.

For a matching $M$ of $J$, denote by $E(M)$ the set of edges $e$ of $G$ such that there exists $f\in M$ with $e\in f$. Note that $|M|=|E(M)|$ since $M$ is a matching.
We let $V(M)$ denote the set of vertices $v$ of $G$ spanned by $E(M)$, and $C(M)$ denote the set of colors $c$ such that there exists $f=(e,c)\in M$ for some edge $e$ of $G$.
We then define the \emph{potential} of a matching $M$ of $J$ as
$$\rho(M):= |M| - |C(M)| - \left( |V(M)|-k\right)\cdot \left(\frac{r-q+1}{p-k} \right).$$

Let $\kappa_F$ denote the set of subgraphs of $G$ that are isomorphic to $F$.
The configuration hypergraph $H$ of $J$ is defined as follows. We let $V(H)=E(J)$ and $E(H)$ be the set of matchings $M$ of $J$ such that
\begin{itemize}\itemsep.05in
    \item $|M|\ge 2$,
    \item $E(M) \subseteq E(K)$ for some $K\in \kappa_F$,
    \item $\rho(M)\ge 0$, 
    \item and subject to those conditions $M$ is inclusion-wise minimal.
\end{itemize}
Note that $H$ is $\ec$-bounded since for every $K\in \kappa_F$, we have $|E(K)|\le \ec$. Furthermore, if $M$ is a matching of $J$ with $|M|=\ec$, $|V(M)|=p$ and $|C(M)|\le q-1$, then \begin{equation}\label{posipo}
    \rho(M) \ge \ec - (q-1) - (p-k) \cdot \left(\frac{\ec-q+1}{p-k} \right) = 0.
\end{equation}

Suppose $Z$ is an $H$-avoiding $A$-perfect matching of $J$. Let $\phi$ be an edge coloring of $G$ defined as $\phi(e) := c$ where $(e,c)\in Z$. Since $Z$ is an $A$-perfect matching of $J$, we have that $\phi$ is well-defined for all edges $e$ of $G$. Since $\left(e,\phi(e)\right)\in E(J)$, it follows that $\phi(e) \in L(e)$ for every edge $e$ of $G$. Furthermore, since $Z$ is $H$-avoiding, it follows from \eqref{posipo} that $\phi$ is an $(F,q)$-coloring of $G$ (since otherwise it would contain a matching $M$ with $|M|=r$, $|V(M)|=p$ and $|C(M)|\le q-1$ and hence would contain an edge of $H$ contradicting that $Z$ is $H$-avoiding). 

Thus it suffices to prove that there exists an $H$-avoiding $A$-perfect matching of $J$. To that end, we verify that $J$ and $H$ satisfy the hypotheses of Theorem~\ref{thm:SmallCodegree} where $\beta, r_1,r_2$ and $D$ are defined as above. 

Let $\mathcal{I}$ denote the set of triples of integers $(m,\vn, \ell)$ with $2\le m \le \ec,~k+1\le \vn\le p,~1\le \ell \le \ec$ where 
\begin{equation}\label{rela:mnl}
m \ge \ell + (\vn-k)\cdot \left(\frac{\ec-q+1}{\vc-k} \right).
\end{equation}
Note that $|\mathcal{I}|$ is a constant that depends only on $k,~p,~q$ and $\ec$.
For a triple $(m,\vn, \ell)\in \mathcal{I}$, we let $H_{m,\vn,\ell}$ be the $m$-uniform configuration hypergraph of $J$ where $E(H_{m,\vn,\ell})$ is the set of edges $M$ of $H$ with $|M|=m$, $|V(M)|=\vn$ and $|C(M)|=\ell$. Note that by definition of $H$, we have $H=\bigcup_{(m,v,\ell)\in \mathcal{I}} H_{m,v,\ell}$.

We first verify that the maximum 2-codegree of $J$ with $H$ and the maximum common 2-degree of $H$ are as desired in Theorem~\ref{thm:SmallCodegree}.
Observe that both of them are at most $\Delta_2(H)$. Therefore, it is sufficient to show the following claim.
\begin{claim}
$\Delta_2(H)\leq D^{1-\beta}$.
\end{claim}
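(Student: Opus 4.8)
The plan is to bound $\Delta_2(H)$ by fixing an edge $(e_0, c_0) \in V(H)$ and counting the edges $M \in E(H)$ of size exactly $2$ that contain it; since every edge of $H$ of size $2$ is a matching $\{(e_0,c_0),(e_1,c_1)\}$ satisfying the potential and membership conditions, the count is governed by how much freedom we have in choosing the second pair $(e_1,c_1)$. First I would observe that because $M = \{(e_0,c_0),(e_1,c_1)\} \in E(H)$ forces $E(M) = \{e_0,e_1\} \subseteq E(K)$ for some $K \in \kappa_F$, the edge $e_1$ must lie in a copy of $F$ through $e_0$; the number of choices for $e_1$ here is $O(n^{\vc - k - k}) = O(n^{\vc - 2k})$ (choose $e_1$ as an edge of $G$ sharing the rest of its $k$ vertices among the $\vc - k$ vertices of $F$ not on $e_0$, more crudely $O(n^{\vc-k})$ if we are generous), and then the number of choices for the color $c_1$ is at most $T = O(D)$. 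So the naive count is something like $O(D \cdot n^{\vc-k})$, which is far too large.

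The key point is that the potential condition $\rho(M) \ge 0$ for a size-$2$ matching is extremely restrictive. Writing out $\rho(\{(e_0,c_0),(e_1,c_1)\}) = 2 - |C(M)| - (|V(M)| - k)\cdot\frac{\ec-q+1}{\vc-k}$, and noting $|C(M)| \ge 1$, $|V(M)| \ge k+1$ (in fact $|V(M)| \ge k+1$ since $e_1 \neq e_0$ are distinct $k$-sets, so they span at least $k+1$ vertices), we get
\begin{equation*}
\rho(M) \le 2 - 1 - 1\cdot\frac{\ec-q+1}{\vc-k} = 1 - \frac{\ec-q+1}{\vc-k}.
\end{equation*}
For this to be $\ge 0$ we need $\ec - q + 1 \le \vc - k$. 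Now here is where non-divisibility enters: I claim that in fact we need $|V(M)| - k$ to be chosen so that $(|V(M)|-k)\cdot\frac{\ec-q+1}{\vc-k}$ is close to an integer, and combined with $|C(M)| \le 2 - (|V(M)|-k)\frac{\ec-q+1}{\vc-k}$, the requirement $\rho(M)\ge 0$ pins down $|V(M)|$, $|C(M)|$ to essentially forced small values. Specifically, if $\vc - k$ does not divide $\ec - q + 1$, then $\frac{\ec-q+1}{\vc-k}$ is not an integer, so $(|V(M)|-k)\cdot\frac{\ec-q+1}{\vc-k}$ is an integer only when $\vc-k \mid (|V(M)|-k)$; but $1 \le |V(M)|-k \le$ (something less than $\vc - k$ for a size-$2$ configuration inside $F$ — we cannot span all $\vc$ vertices of $F$ with only $2$ edges unless $F$ is very small), which is impossible, so the product is never an integer, forcing a genuine gap that makes the constraint $\rho(M) \ge 0$ fail outright, OR forces $|V(M)|$ to take only a bounded number of values each of which requires $e_1$ to share $k-1$ or $k$ of its vertices with $e_0$ — i.e. $O(n^{k-1}) = O(n^{k-1})$ many choices of $e_1$, and then $O(D)$ choices of colour. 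One would then check $D \cdot n^{k-1} \le D^{1-\beta}$, using $T = \Theta\big((n^{\vc-k}/\log n)^{1/(\ec-q+1)}\big)$ and $\beta = \frac{1}{2(\vc-k)}$ to convert $n$-powers into $D$-powers and confirm the exponent inequality.

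The main obstacle I expect is the bookkeeping in the previous paragraph: correctly determining, for a minimal configuration $M$ of size $2$ inside a copy of $F$, exactly which values $(|V(M)|, |C(M)|)$ are compatible with $\rho(M) \ge 0$ and with minimality (recall $M \in E(H)$ must be inclusion-wise minimal among matchings with nonnegative potential contained in a copy of $F$ — a size-$2$ edge of $H$ is automatically minimal among size-$\ge 2$ configurations, so minimality gives nothing extra here, but one should double-check that a size-$1$ "configuration" never has nonnegative potential, which it does not since singletons are not edges of $H$ by the $|M| \ge 2$ requirement, though one still wants $\rho$ of a singleton to be negative to avoid trouble elsewhere). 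The cleanest route is: show $\rho(M) \ge 0$ with $|M| = 2$ forces $|C(M)| = 1$ and $|V(M)| - k$ small with $(|V(M)|-k)\cdot\frac{\ec-q+1}{\vc-k} \le 1$; then since $\frac{\ec-q+1}{\vc-k}$ is non-integral and $1 \le |V(M)|-k$, argue the product is either $\ge 1$ with strict inequality impossible to compensate, or the only surviving case has $|V(M)| = k+1$ with a strict numerical slack that still must be absorbed — ultimately concluding that the number of valid second pairs $(e_1, c_1)$ is at most $O(n^{k-1} \cdot T) \le O(D^{2}) \cdot$ (a subpolynomial-in-$D$ correction), wait — more carefully, at most $O(n^{k-1}\,T)$, and then verifying $n^{k-1} T \le D^{1-\beta}$ via the definitions of $T$, $D$, and $\beta$. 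I would present the argument as: (i) set up the count, (ii) apply the potential inequality to cut down $|C(M)|$ and $|V(M)|$, (iii) invoke non-divisibility to eliminate the dangerous regime, (iv) finish with the arithmetic comparing powers of $n$ and $D$.
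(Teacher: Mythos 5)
Your proposal has the right overall plan---use $\rho(M)\ge 0$ to pin down $(|C(M)|,|V(M)|)$, invoke non-divisibility, convert $n$-powers to $D$-powers---but two concrete steps are wrong, and they're exactly the ones that make the count small enough.

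\textbf{The color is forced; you still carry a factor of $T$.} For a size-two matching $M=\{(e,c),(e',c')\}\in E(H)$, the constraint $\rho(M)=2-|C(M)|-(|V(M)|-k)\cdot\tfrac{\ec-q+1}{\vc-k}\ge 0$ together with $|V(M)|>k$ and $\ec-q+1>0$ immediately gives $|C(M)|<2$, hence $|C(M)|=1$, i.e.\ $c=c'$. So there is no choice at all for the second color. Your final count $O(n^{k-1}T)$ retains an $O(T)=O(D)$ factor for the color, which is precisely the factor the potential condition was designed to eliminate; with it, the exponent arithmetic does not close.

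\textbf{The divisibility hypothesis is reversed.} The theorem assumes $\vc-k$ is not divisible by $\ec-q+1$, i.e.\ $\tfrac{\vc-k}{\ec-q+1}\notin\mathbb{Z}$. You write ``if $\vc-k$ does not divide $\ec-q+1$'' and then reason about whether $\tfrac{\ec-q+1}{\vc-k}$ is an integer and whether $\vc-k\mid(|V(M)|-k)$; none of that is what's needed. Once $|C(M)|=1$, the constraint $\rho(M)\ge 0$ becomes $|V(M)|-k\le\tfrac{\vc-k}{\ec-q+1}$; since the left side is a positive integer and (by non-divisibility) the right side is not, the floor strictly drops: $|V(M)|-k\le\bigl\lfloor\tfrac{\vc-k}{\ec-q+1}\bigr\rfloor\le\tfrac{\vc-k-1}{\ec-q+1}$. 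That ``$-1$'' is the whole point of the hypothesis. The count is then at most $n^{(\vc-k-1)/(\ec-q+1)}$ with no $T$ factor, and using $n=O\bigl((\log D)^{1/(\vc-k)}D^{(\ec-q+1)/(\vc-k)}\bigr)$ this is $O\bigl((\log D)^{O(1)}D^{1-1/(\vc-k)}\bigr)\le D^{1-\beta}$ because $\beta=\tfrac{1}{2(\vc-k)}<\tfrac{1}{\vc-k}$. The excursion about ``$(|V(M)|-k)\tfrac{\ec-q+1}{\vc-k}$ being close to an integer'' is a red herring: the product need not be an integer, one only needs $\tfrac{\vc-k}{\ec-q+1}$ to be non-integral so the floor gives strict savings.
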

\begin{proof}
Let $M=\{f, f'\}$ be a 2-configuration in $H$ with $f=(e, (e, c))$ and $f'=(e', (e', c'))$. By the definition of $H$, we have 
\[
\rho(M)=2-|C(M)|-\left(|V(M)|-k\right)\cdot \left(\frac{r-q+1}{p-k} \right)\geq 0.
\]
Since $|V(M)|>k$ and $q\leq r$, we must have $|C(M)|\leq 1$, which gives $c=c'$. Furthermore, this indicates that
\[
|V(M)|\leq k + \left\lfloor\frac{p-k}{r-q+1} \right\rfloor\leq k + \frac{p-k-1}{r-q+1},
\]
where the last inequality\footnote{Indeed, this is the only place throughout the proof which uses the non-divisibility.} follows since $p-k$ is not divisible by $r-q+1$.

By the definition of $D$, there exists a constant $C_0$ such that
$
n\leq C_0(\log D)^{\frac{1}{p-k}}\cdot D^{\frac{r-q+1}{p-k}}.
$
Therefore by the above discussion, for any given $f$, the number of 2-configurations containing $f$ is at most
\[
n^{\frac{p-k-1}{r-q+1}}\leq C_0^{\frac{p-k-1}{r-q+1}}\log D\cdot D^{1 -\frac{1}{p-k}}\leq D^{1-\beta},
\]
which completes the proof.
\end{proof}

Next, we check the degrees of $H$. 
Let $(e, c)\in V(H)$, and assume $e=\{u_1,u_2,\ldots,u_k\}$.
Fix an arbitrary triple $(m,\vn,\ell)\in\mathcal{I}$.
In particular, by the choices of $\vn, k, p, q, r$ and \eqref{rela:mnl}, we have 
\begin{equation}\label{ineq:ml}
m-\ell> 1.
\end{equation}
We now calculate an upper bound on the number of $M\in E(H_{m,\vn,\ell})$ such that $(e, c)\in M$ as follows. First, there are at most $n^{\vn-k}$ choices for the $\vn-k$ vertices of $V(M)\setminus \{u_1,u_2,\ldots,u_k\}$.
Second, there are at most $T^{\ell-1}$ choices for the $\ell-1$ colors in $C(M)\setminus \{c\}$.
Once we have fixed $V(M)$ and $C(M)$, the number of choices for $M$ is at most $O(1)$.
Therefore, there exists a constant $C_1$ such that
\begin{align*}
\frac{d_{H_{m,\vn,\ell}}((e, c))}{D^{m-1}\log D}
&\leq C_1\cdot \frac{n^{\vn-k}\cdot T^{\ell-1}}{T^{m-1}\log n} \leq C_1\cdot C^{\ell-m}\cdot \frac{n^{\vn-k}}{\log n}\left(\frac{n^{\vc-k}}{\log n}\right)^{\frac{\ell-m}{\ec - q + 1}}.
\end{align*}
Since $n$ and $C$ are chosen to be sufficiently large and since $p > k$, we have $C,\frac{n^{\vc-k}}{\log n}>1$. Therefore by \eqref{rela:mnl}, \eqref{ineq:ml}, and the monotonicity of the function $a^x$ (where $a>1$), we have
\begin{align*}
\frac{d_{H_{m,\vn,\ell}}((e, c))}{D^{m-1}\log D}
& \leq \frac{C_1}{C}\cdot \frac{n^{\vn-k}}{\log n}\left(\frac{n^{\vc-k}}{\log n}\right)^{-\frac{\vn-k}{\vc-k}} = \frac{C_1}{C} \cdot (\log n)^{\frac{\vn-k}{\vc-k}-1}\leq \frac{\alpha}{|\mathcal{I}|},
\end{align*}
where the last inequality follows since $C$ is chosen to be large enough and $\vn\leq \vc$. Therefore, for any $2\leq i\leq \ec$ and $(e,c)\in V(H)$, we have
\[
d_{H, i}((e, c))=\sum_{(i,\vn,\ell)\in\mathcal{I}}d_{H_{i,\vn,\ell}}((e, c))\leq \alpha \cdot D^{i-1}\log D.
\]

We now check the codegrees of $H$. Fix integers $s,t$ with $2\le t < s \le r_2$.  Let $S$ be a subset of $V(H)$ of size $t$. We desire to show that $d_{H,s}(S) \le D^{s-t-\beta}$. If $S$ is not strictly contained in at least one edge of $H$, then $d_{H,s}(S)=0$ as desired. So we assume that $S$ is strictly contained in some edge $f$ of $H$. 
By definition of $H$,  we have that $E(f) \subseteq E(K)$ for some $K\in \kappa_F$ and $\rho(f)\ge 0$. Since $|S|=t\ge 2$ and $S\subsetneq f$, we have by the minimality of $H$ that $\rho(S)<0$.

Recall that $|S|=t$, and let $\vn' := |V(S)|$ and $\ell':=|C(S)|$. Since $\rho(S)\cdot (p-k)$ is an integer, it follows that $\rho(S) \le -\frac{1}{p-k}$ and hence
\begin{equation}\label{rela:mnl'}
t \le \ell' + (\vn'-k)\left(\frac{\ec-q+1}{\vc-k} \right)-\frac{1}{\vc-k}.
\end{equation}
Fix an arbitrary triple $(s,\vn,\ell)\in\mathcal{I}$ with $s>t,~\vn\ge \vn'$ and $\ell\ge \ell'$. 
Let $\mu:=(\ell-s)-(\ell'-t)$, and recall that $\beta=\frac{1}{2(\vc-k)}$. Then by \eqref{rela:mnl} and \eqref{rela:mnl'}, we have 
\begin{equation}\label{ineq:mu}
   \mu\leq -(\vn-\vn')\left(\frac{\ec-q+1}{\vc-k} \right)-2\beta, 
\end{equation}
and in particular, $\mu+\beta <0$.

We now calculate an upper bound on the number of $M\in E(H_{s,\vn,\ell})$ such that $S\subseteq M$ as follows. Similarly as before, there are at most $n^{\vn-\vn'}$ choices for the $\vn-\vn'$ vertices of $V(M)\setminus V(S)$, and at most $T^{\ell-\ell'}$ choices for the $\ell-\ell'$ colors in $C(M)\setminus C(S)$. Once we have fixed $V(M)$ and $C(M)$, the number of choices for $M$ is at most $O(1)$. Therefore, there exists a constant $C_2$ such that
\begin{align*}
\frac{d_{H_{s,\vn,\ell}}(S)}{D^{s-t-\beta}}
&\leq C_2\cdot \frac{n^{\vn-\vn'}\cdot T^{\ell-\ell'}}{T^{s-t-\beta}} 
\leq C_2\cdot C^{\mu+\beta}\cdot n^{\vn-\vn'}\left(\frac{n^{\vc-k}}{\log n}\right)^{\frac{\mu+\beta}{\ec - q + 1}}.
\end{align*}
Once again, since $n$ and $C$ are chosen to be sufficiently large and $p>k$, we have $C,\frac{n^{\vc-k}}{\log n}>1$. Therefore by \eqref{ineq:mu} and the monotonicity of the function $a^x$ (where $a>1$), we have
\begin{align*}
\frac{d_{H_{s,\vn,\ell}}(S)}{D^{s-t-\beta}}
& \leq C_2\cdot n^{\vn-\vn'}\left(n^{\vc-k}\right)^{-\frac{\vn-\vn'}{\vc-k}} \left(n^{\vc-k}\right)^{-\frac{\beta}{r-q+1}} 
= C_2\cdot n^{-\frac{\beta(\vc-k)}{r-q+1}} \leq \frac{1}{|\mathcal{I}|},
\end{align*}
where the last inequality follows since $n$ is chosen to be large enough and $\vc > k$.

Hence, for any $2\leq t< s\leq r_2$, we have
\[
\Delta_{s,t}(H) = \max_{S\in \binom{V(H)}{t}} d_{H,s}(S)
\leq\max_{S\in \binom{V(H)}{t}}\sum_{(s,\vn,\ell)\in\mathcal{I}}d_{H_{s,\vn,\ell}}(S)\leq D^{s-t-\beta}.
\]

Finally, applying Theorem~\ref{thm:SmallCodegree} on $J$ and $H$, we obtain that there exists an $H$-avoiding $A$-perfect matching of $J$, which completes the proof.
\end{proof}

\section{Concluding Remarks}

We note that if $F$ is connected, then Theorem~\ref{thm:base} holds with $n$ replaced by $(k-1)\Delta(G)$. This is a natural bound in the ``sparse'' regime (when $\Delta(G)$ is much smaller than $n$). Unfortunately our proof of Theorem~\ref{thm:main2} does not immediately carry over to provide a logarithmic improvement over this simple bound in the sparse regime. The reason is that, while $F$ is connected, the forbidden configurations (the edges of $H$) are simply subsets of $F$ and hence may induce a disconnected graph. So we are unsure what the correct bound is in the sparse regime. We think it would be interesting to determine these numbers more precisely.

As mentioned in the introduction, Glock, Joos, Kim, K\"uhn and Lichev~\cite{GJKKL22} independently developed a similar theory of avoiding forbidden submatchings as in the Forbidden Submatching Method of~\cite{DP22}. The proof of Glock et.~al.~proceeds via a random greedy process while the proof in~\cite{DP22} uses the nibble method. One of the main results of~\cite{GJKKL22} is a weaker form of Theorem~\ref{thm:SmallCodegree} which essentially has two limitations compared to Theorem~\ref{thm:SmallCodegree}. First it only holds for dense graphs (where $D \ge {\rm polylog} |V(J)|$); second, it only finds an almost perfect matching in a general hypergraph $J$. While not immediately obvious, the bipartite $A$-perfect formulation in Theorem~\ref{thm:SmallCodegree} actually implies the coloring and list coloring versions of their results and more. Intriguingly, even though the main results of this paper, Theorems~\ref{thm:main1} and~\ref{thm:main2} are in the dense regime, the theorems do not follow from the results of Glock et.~al.~since the proof requires the $A$-perfect formulation of Theorem~\ref{thm:SmallCodegree} to actually find an $(F,q)$-coloring of all edges of $G$ instead of just for almost all edges of $G$.

We also note that the idea of excluding not just the natural forbidden configurations but rather a larger set including various other smaller forbidden configurations to aid with codegrees has been used before. For example, Glock, Joos, Kim, K\"uhn Lichev and Pikhurko~\cite{GJKKLP22} did this in their paper on the problem of Brown, Erd\H{o}s and S\'os. However, here we used a potential function to define our set of forbidden configurations; this notion of a potential for forbidden configurations was also implicitly used by the first author, Dudek and English~\cite{BDE22} (see their function `pow') but here the use is more clearly related to satisfying the codegree conditions of Theorem~\ref{thm:SmallCodegree}. Altogether, we believe such additional tricks for encoding problems in the framework of the Forbidden Submatching Method will prove useful for future applications.

In this paper, we also defined the generalized list Ramsey number. While we believe this notion is quite natural, we had not seen it in the literature to date. As seen in Theorem~\ref{thm:base}, the upper bounds from Theorems~\ref{thm:EG} and~\ref{thm:AFM} hold for list coloring. Of course, the lower bounds for coloring also carry over to list coloring. Thus the linear and quadratic thresholds for $r_{\ell}(K_n,K_p,q)$ are the same as for $r(K_n,K_p,q)$. Nevertheless, we wonder if the list version may be interesting in its own right. In particular, could there be better lower bound constructions when one is allowed to choose a list assignment? So we ask the following.

\begin{question}
Do $r(K_n,K_p,q)$ and $r_{\ell}(K_n,K_p,q)$ differ in order of magnitude for some $p$ and $q$? 
\end{question}

\begin{question}
More generally for a fixed positive integer $k \geq 2$, do $r(K_n^{(k)},K_p^{(k)},q)$ and $r_{\ell}(K_n^{(k)},K_p^{(k)},q)$ differ in order of magnitude for some $p$ and $q$? 
\end{question}

\bibliographystyle{plain}

\begin{thebibliography}{10}

\bibitem{AKPSS82}
Mikl{\'o}s Ajtai, J{\'a}nos Koml{\'o}s, Janos Pintz, Joel Spencer, and Endre
  Szemer{\'e}di.
\newblock Extremal uncrowded hypergraphs.
\newblock {\em Journal of Combinatorial Theory, Series A}, 32(3):321--335,
  1982.

\bibitem{AS16}
Noga Alon and Joel~H Spencer.
\newblock {\em The Probabilistic Method}.
\newblock John Wiley \& Sons, 2016.

\bibitem{AFM00}
Maria Axenovich, Zolt{\'a}n F{\"u}redi, and Dhruv Mubayi.
\newblock On generalized {R}amsey theory: the bipartite case.
\newblock {\em Journal of Combinatorial Theory, Series B}, 79(1):66--86, 2000.

\bibitem{BEHK21}
J{\'o}zsef Balogh, Sean English, Emily Heath, and Robert~A Krueger.
\newblock Lower bounds on the {E}rd{\H{o}}s--{G}y{\'a}rf{\'a}s problem via
  color energy graphs.
\newblock {\em Journal of Graph Theory}, 103(2):378--409, 2023.

\bibitem{BCDP22}
Patrick Bennett, Ryan Cushman, Andrzej Dudek, and Pawel Pra{\l}at.
\newblock The {E}rd{\H{o}}s-{G}y{\'a}rf{\'a}s function $f (n, 4, 5)= 5n/6+ o
  (n)$--so {G}y{\'a}rf{\'a}s was right.
\newblock {\em Journal of Combinatorial Theory, Series B}, 169:253--297, 2024.

\bibitem{BDE22}
Patrick Bennett, Andrzej Dudek, and Sean English.
\newblock A random coloring process gives improved bounds for the
  {E}rd{\H{o}}s-{G}y{\'a}rf{\'a}s problem on generalized {R}amsey numbers.
\newblock {\em arXiv:2212.06957}, 2022.

\bibitem{BES73}
William Brown, Paul Erd{\H{o}}s, and Vera S{\'o}s.
\newblock Some extremal problems on $r$-graphs, {N}ew directions in the theory
  of graphs.
\newblock pages 53--63, 1973.

\bibitem{CFLS15}
David Conlon, Jacob Fox, Choongbum Lee, and Benny Sudakov.
\newblock The {E}rd{\H{o}}s--{G}y{\'a}rf{\'a}s problem on generalized {R}amsey
  numbers.
\newblock {\em Proceedings of the London Mathematical Society}, 110(1):1--18,
  2015.

\bibitem{DP22}
Michelle Delcourt and Luke Postle.
\newblock Finding an almost perfect matching in a hypergraph avoiding forbidden
  submatchings.
\newblock {\em arXiv:2204.08981}, 2022.

\bibitem{DP22BES}
Michelle Delcourt and Luke Postle.
\newblock The limit in the $(k+2,k)$-{P}roblem of {B}rown, {E}rd{\H{o}}s and
  {S}{\'o}s exists for all $k\geq 2$.
\newblock {\em arXiv:2210.01105}, 2022.

\bibitem{E75}
Paul Erd{\H{o}}s.
\newblock Problems and results on finite and infinite graphs.
\newblock In {\em Recent advances in graph theory (Proc. Second Czechoslovak
  Sympos., Prague, 1974)}, pages 183--192, 1975.

\bibitem{EG97}
Paul Erd{\H{o}}s and Andr{\'a}s Gy{\'a}rf{\'a}s.
\newblock A variant of the classical {R}amsey problem.
\newblock {\em Combinatorica}, 17(4):459--467, 1997.

\bibitem{fish2020local}
Sara Fish, Cosmin Pohoata, and Adam Sheffer.
\newblock Local properties via color energy graphs and forbidden
  configurations.
\newblock {\em SIAM Journal on Discrete Mathematics}, 34(1):177--187, 2020.

\bibitem{GJKKL22}
Stefan Glock, Felix Joos, Jaehoon Kim, Marcus Kühn, and Lyuben Lichev.
\newblock Conflict-free hypergraph matchings.
\newblock {\em Journal of the London Mathematical Society}, 109(5):e12899,
  2024.

\bibitem{GJKKLP22}
Stefan Glock, Felix Joos, Jaehoon Kim, Marcus Kühn, Lyuben Lichev, and Oleg
  Pikhurko.
\newblock On the (6,4)-problem of {B}rown, {E}rd{\H{o}}s, and {S}{\'o}s.
\newblock {\em Proceedings of the American Mathematical Society, Series B},
  11(17):173--186, 2024.

\bibitem{GKLO20}
Stefan Glock, Daniela K{\"u}hn, Allan Lo, and Deryk Osthus.
\newblock On a conjecture of {E}rd{\H{o}}s on locally sparse {S}teiner triple
  systems.
\newblock {\em Combinatorica}, 40(3):363--403, 2020.

\bibitem{JM22}
Felix Joos and Dhruv Mubayi.
\newblock Ramsey theory constructions from hypergraph matchings.
\newblock {\em Proceedings of the American Mathematical Society}, 2024.

\bibitem{M98}
Dhruv Mubayi.
\newblock Edge-coloring cliques with three colors on all 4-cliques.
\newblock {\em Combinatorica}, 18(2):293--296, 1998.

\bibitem{MS20}
Dhruv Mubayi and Andrew Suk.
\newblock {\em A Survey of Hypergraph Ramsey Problems}, pages 405--428.
\newblock Springer International Publishing, Cham, 2020.

\bibitem{PS89}
Nicholas Pippenger and Joel Spencer.
\newblock Asymptotic behavior of the chromatic index for hypergraphs.
\newblock {\em Journal of Combinatorial Theory, Series A}, 51(1):24--42, 1989.

\bibitem{pohoata2019local}
Cosmin Pohoata and Adam Sheffer.
\newblock Local properties in colored graphs, distinct distances, and
  difference sets.
\newblock {\em Combinatorica}, 39(3):705--714, 2019.

\bibitem{shangguan2020sparse}
Chong Shangguan and Itzhak Tamo.
\newblock Sparse hypergraphs with applications to coding theory.
\newblock {\em SIAM Journal on Discrete Mathematics}, 34(3):1493--1504, 2020.

\end{thebibliography}

\end{document}